\newtheorem{theorem}{Theorem}
\newtheorem{corollary}[theorem]{Corollary}
\newtheorem{lemma}[theorem]{Lemma}
\newtheorem{proposition}[theorem]{Proposition}
\newcommand\ex{\ensuremath{\mathrm{ex}}}
\title{Tur\'{a}n numbers for Berge-hypergraphs and related extremal problems}
\date{}
\author{
	Cory Palmer\thanks{Department of Mathematical Sciences, University of Montana, \texttt{cory.palmer@umontana.edu}. Research supported by University of Montana UGP Grant \#M25460.}
	\and
	Michael Tait\thanks{Department of Mathematical Sciences, Carnegie Mellon University, \texttt{mtait@cmu.edu}. Research is supported by NSF grant DMS-1606350.}
	\and
	Craig Timmons\thanks{Department of Mathematics and Statistics, California State University Sacramento, \texttt{craig.timmons@csus.edu}.
Research supported in part by Simons Foundation Grant \#359419.}
	\and
	Adam Zsolt Wagner\thanks{Department of Mathematics, University of Illinois at Urbana-Champaign, \texttt{zawagne2@illinois.edu}}
	}
\begin{document}

\maketitle

\begin{abstract}
	 Let $F$ be a graph.  We say that a hypergraph $H$ is a
	{\it Berge}-$F$ if there is a 
	bijection $f : E(F) \rightarrow E(H )$ such that 
	$e \subseteq f(e)$ for every $e \in E(F)$. Note that Berge-$F$ actually denotes a class of hypergraphs. The maximum number of edges in an $n$-vertex $r$-graph
	with no subhypergraph isomorphic to any Berge-$F$ is denoted $\ex_r(n,\textrm{Berge-}F)$.
	In this paper we establish new upper and lower bounds on $\ex_r(n,\textrm{Berge-}F)$ for general graphs $F$, 
	and investigate connections between $\ex_r(n,\textrm{Berge-}F)$ and other 
	recently studied extremal functions for graphs and hypergraphs. 
	One case of specific interest will be when $F = K_{s,t}$.  
		Additionally, we prove a counting result for $r$-graphs of girth five that complements the asymptotic 
	formula $\textup{ex}_3 (n , \textrm{Berge-}\{ C_2 , C_3 , C_4 \} ) = \frac{1}{6} n^{3/2} + o( n^{3/2} )$ of Lazebnik and 
	Verstra\"{e}te [{\em Electron.\ J. of Combin}. {\bf 10}, (2003)].

\end{abstract}

\section{Introduction}

Let $F$ be a graph and $H$ be a hypergraph.
The hypergraph $H$ is a \emph{Berge-$F$} if there is a 
bijection $f : E(F) \rightarrow E(H)$ such that 
$e \subseteq f(e)$ for every $e \in E(F)$.  
Here we are following the presentation of Gerbner and Palmer \cite{gp}.
This notion of a Berge-$F$ extends 
Berge cycles and Berge paths, which have been investigated, to all graphs.  
In general, Berge-$F$ is a family of graphs.  Given an integer $r \geq 2$, write  
\[
\ex_r ( n , \textrm{Berge-}F )
\] 
for the maximum number of edges in an $r$-uniform hypergraph ($r$-graph for short) on $n$ vertices
that does not contain a subhypergraph isomoprhic to a member of Berge-$F$.  
In the case that $r =2$, Berge-$F$ consists of a single graph, namely $F$, and 
$\ex_2 (n ,  \textrm{Berge-}F )$ is the same as the 
usual Tur\'{a}n number $\ex(n , F)$.  

By results of Gy\H{o}ri, Katona and Lemons \cite{GyKaLe} and Davoodi, Gy\H{o}ri, Methuku and Tompkins \cite{DaETAL}, 
we get tight bounds on $\ex_r(n, \textup{Berge-}P_\ell)$ where $P_\ell$ is a path of length $\ell$.
When $F$ is a cycle and $r\geq 3$,  Gy\H{o}ri and Lemons \cite{GyLe4} determined
\[
\ex_r(n,\textup{Berge-}C_{2\ell}) = O(n^{1+1/\ell})
\]
where the multiplicative constant depends on $r$ and $\ell$. This upper bound matches the order of magnitude in the graph case as given by the classical Even-Cycle Theorem of Bondy and Simonovits \cite{BoSi}. Unexpectedly, the same upper-bound holds in the odd case, i.e., for $r \geq 3$ it was shown in \cite{GyLe4} that
\[
\ex_r(n,\textup{Berge-}C_{2\ell+1}) = O(n^{1+1/\ell}).
\]
This differs significantly from the graph case where we may have $\lfloor n^2/4 \rfloor$ edges and no odd cycle.

Instead of a class of forbidden subhypergraphs, much effort has been spent on determining the Tur\'an number of individual hypergraphs. One case closely related to the Berge question is the so-called expansion of a graph.
Fix a graph $F$ and let $r \geq 3$ be an integer. The \emph{r-uniform expansion} of $F$ is the $r$-uniform hypergraph $F^+$ obtained from $F$ by enlarging each edge of $F$ with $r-2$ new vertices disjoint from $V(F)$ such that distinct edges of $F$ are enlarged by distinct vertices.
More formally, we replace each 
edge $e \in E(F)$ with an $r$-set $e \cup S_e$ where the sets $S_e$ have $r-2$ vertices and 
$S_e \cap S_f = \emptyset$ whenever $e$ and $f$ are distinct edges of $H$.  

The $r$-graph $F^+$ has the same number of edges as $F$, but has 
$| V(F) | + |E(F)| (r - 2)$
vertices.  The special case when $F$ is a complete graph $K_k$ has been studied by 
Mubayi \cite{mubayi} and Pikhurko \cite{pikhurko}.  A series of papers 
\cite{kmv I, kmv II, kmv III} by Kostochka, Mubayi, and  Verstra\"{e}te consider 
expansions for paths, cycles, trees, as well as other graphs.  
The survey of Mubayi and Verstra\"{e}te \cite{mv survey} discusses these results as well as many others.              
Given an integer $r \geq 3$ and a graph $F$, we write 
\[
\ex_r ( n , F^+ )
\]
for the maximum number of edges in an $n$-vertex $r$-graph that does not 
contain a subhypergraph isomorphic to $F^+$. A representative theorem in \cite{kmv III} is that
\[
\ex_3(n,K_{s,t}^+) = O(n^{3-3/s})
\]
whenever $t \geq s \geq 3$. It is also shown that this bound is sharp when $t > (s-1)!$.

For a fixed graph $F$, both the Berge-$F$ and expansion $F^+$ hypergraph problems are closely related to counting certain subgraphs in (ordinary) graphs with no subgraph isomorphic to $F$. Let $G$ and $F$ be graphs.  Following Alon and Shikhelman \cite{as}, write  
\[
\ex( n , G , F)
\]
for the maximum number of copies of $G$ in an $F$-free graph with $n$ vertices.  
A graph is \emph{$F$-free} if it does not contain a subgraph isomorphic to $F$.  
The function $\ex( n , G , F)$ was studied in the case $(G, F) = (K_3 , C_5)$ by Bollob\'{a}s and Gy\H{o}ri \cite{bg}, 
and when $(G , F) = (K_3 , C_{2\ell+1})$ by Gy\H{o}ri  and Li \cite{gl}.  Later, 
Alon and Shikhelman \cite{as} initiated a general study of $\ex( n , G , F)$. Among others, they proved

\begin{theorem}[Alon, Shikhelman \cite{as}]\label{alon shikhelman}
	If $F$ is a graph with chromatic number $\chi(F) = k > r$, then
	\[
	\ex(n,K_r,F)=(1+o(1))\binom{k-1}{r}\left(\frac{n}{k-1}\right)^r.
	\]	
\end{theorem}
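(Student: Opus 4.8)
The plan is to sandwich $\ex(n,K_r,F)$ between the number of copies of $K_r$ in the Tur\'{a}n graph $T(n,k-1)$ and the same quantity up to a $(1+o(1))$ factor. For the lower bound, observe that $T(n,k-1)$, the complete balanced $(k-1)$-partite graph, is $F$-free: any subgraph of a $(k-1)$-partite graph is $(k-1)$-colorable, whereas $\chi(F)=k$. A copy of $K_r$ in $T(n,k-1)$ is obtained by choosing $r$ of the $k-1$ parts and one vertex from each, so the number of such copies is
\[
\binom{k-1}{r}\left(\frac{n}{k-1}\right)^r(1+o(1)),
\]
which gives the required lower bound and identifies the target main term.

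For the upper bound I would reduce the $F$-free condition to a $K_k$-free condition. First I would show that an $F$-free graph $G$ on $n$ vertices contains only $o(n^k)$ copies of $K_k$. Since $\chi(F)=k$, the graph $F$ embeds into the complete $k$-partite graph $K_{k\times t}$ with parts of size $t=|V(F)|$. If $G$ contained at least $\varepsilon n^k$ copies of $K_k$ for some fixed $\varepsilon>0$, then the $k$-uniform hypergraph whose edges are the vertex sets of these cliques would have positive density, and by the hypergraph K\H{o}v\'{a}ri--S\'{o}s--Tur\'{a}n theorem of Erd\H{o}s it would contain a complete $k$-partite $k$-uniform subhypergraph with parts of size $t$; this yields disjoint sets $V_1,\dots,V_k$ of size $t$, every transversal of which spans a $K_k$, forcing all cross-pairs to be edges and hence a copy of $K_{k\times t}\supseteq F$ in $G$, a contradiction. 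Thus $G$ has at most $\varepsilon n^k$ copies of $K_k$ for every fixed $\varepsilon>0$ once $n$ is large.

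Next I would apply the graph removal lemma for $K_k$: since $G$ has at most $\varepsilon n^k$ copies of $K_k$, one can delete at most $\delta(\varepsilon)n^2$ edges, where $\delta(\varepsilon)\to 0$ as $\varepsilon\to 0$, to obtain a $K_k$-free graph $G'$. Every copy of $K_r$ in $G$ absent from $G'$ must use a deleted edge, and each edge lies in at most $\binom{n-2}{r-2}=O(n^{r-2})$ copies of $K_r$; hence the number of copies lost is at most $\delta(\varepsilon)n^2\cdot O(n^{r-2})=O(\delta(\varepsilon)n^r)$. Finally, by Zykov's theorem the number of copies of $K_r$ in the $K_k$-free graph $G'$ is at most the number in $T(n,k-1)$, namely $\binom{k-1}{r}(n/(k-1))^r(1+o(1))$. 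Combining, the number of copies of $K_r$ in $G$ is at most
\[
\binom{k-1}{r}\left(\frac{n}{k-1}\right)^r(1+o(1))+O(\delta(\varepsilon)n^r),
\]
and letting $\varepsilon\to 0$ (so $\delta(\varepsilon)\to 0$) after $n\to\infty$ matches the lower bound.

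The main obstacle is the first reduction step, that is, converting the qualitative hypothesis that $G$ is $F$-free into the quantitative bound that $G$ has $o(n^k)$ copies of $K_k$; this is precisely where the chromatic number enters, through the embedding $F\subseteq K_{k\times t}$ and the supersaturation/blow-up phenomenon. Once the clique count is controlled, the removal lemma and Zykov's exact result for $\ex(n,K_r,K_k)$ are essentially off-the-shelf, and the only care needed is in the order of the limits: fix the target relative error, choose $\varepsilon$ (hence $\delta(\varepsilon)$) small, and only then take $n$ large.
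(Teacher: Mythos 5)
This statement is quoted from Alon--Shikhelman \cite{as} and the paper gives no proof of it, so there is no in-paper argument to compare against; I can only assess your proof on its own terms, and it is correct. The lower bound via the Tur\'an graph is right, and the upper bound correctly chains together three standard ingredients: (i) supersaturation --- an $F$-free graph has $o(n^k)$ copies of $K_k$, proved via Theorem~\ref{erdos kst} applied to the $k$-graph of $K_k$'s to extract a $K^k(t)$ whose transversals force a $K_{k\times t}\supseteq F$; (ii) the $K_k$ removal lemma, with the observation that each deleted edge destroys only $O(n^{r-2})$ copies of $K_r$; and (iii) Zykov's theorem that $T(n,k-1)$ maximizes the $K_r$-count among $K_k$-free graphs. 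You also handle the order of quantifiers correctly (fix the target error, choose $\varepsilon$, then let $n\to\infty$). This is essentially the route Alon and Shikhelman take, and step (i) is the same mechanism the present paper uses in its proofs of Theorems~\ref{non degenerate thm} and~\ref{degenerate thm} (supersaturation plus Theorem~\ref{erdos kst}), except that there the authors build the blow-up $K_k(f)^+$ directly inside the host rather than invoking the removal lemma; your use of removal plus Zykov buys a shorter argument at the cost of importing a much heavier (regularity-based) tool, whereas the direct blow-up extraction stays elementary.
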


Note that the famous Erd\H os-Stone theorem is the case when $r=2$.

The next proposition demonstrates a connection between the three extremal functions 
that we have defined so far.  

\begin{proposition}\label{simple ineq}
If $H$ is a graph and $r \geq 2$, then 
\[
\ex(n , K_r , F) \leq \ex_r ( n , \textup{Berge-}F ) \leq \ex_r ( n , F^+ ).
\]
\end{proposition}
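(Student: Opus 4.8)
The plan is to prove the two inequalities separately, each following from a direct construction or observation about the definitions; I expect the first inequality to carry the real content and the second to be essentially a remark.

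For the lower bound $\ex(n,K_r,F) \le \ex_r(n,\textup{Berge-}F)$, I would start with a graph $G$ on $n$ vertices that is $F$-free and contains exactly $\ex(n,K_r,F)$ copies of $K_r$. Define an $r$-graph $H$ on the same vertex set $V(G)$ whose hyperedges are precisely the $r$-element vertex sets spanning a $K_r$ in $G$; distinct cliques have distinct vertex sets, so $|E(H)| = \ex(n,K_r,F)$. It then suffices to check that $H$ contains no Berge-$F$. Suppose for contradiction that $H$ contained a Berge-$F$. This would provide distinct ``core'' vertices $\{v_x : x \in V(F)\}$ together with distinct hyperedges $h_e$, one for each $e \in E(F)$, satisfying $\{v_x, v_y\} \subseteq h_e$ whenever $e = xy$. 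Each $h_e$ is, by construction, the vertex set of a clique in $G$, so in particular the pair $v_x v_y$ is an edge of $G$. Reading these edges off yields a copy of $F$ on the core vertices inside $G$, contradicting $F$-freeness. Hence $H$ is Berge-$F$-free and has $\ex(n,K_r,F)$ edges, which establishes the inequality. (When $r=2$ this merely recovers $\ex(n,F) = \ex_2(n,\textup{Berge-}F)$.)

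For the upper bound $\ex_r(n,\textup{Berge-}F) \le \ex_r(n,F^+)$, the key observation is that the expansion $F^+$ is itself one of the hypergraphs in the family Berge-$F$: enlarging each edge $e$ to $e \cup S_e$ furnishes a bijection $f : E(F) \to E(F^+)$ with $e \subseteq f(e)$. Consequently, any $r$-graph that contains $F^+$ as a subhypergraph also contains a member of Berge-$F$; contrapositively, every Berge-$F$-free $r$-graph is $F^+$-free. The class of $n$-vertex Berge-$F$-free $r$-graphs is therefore contained in the class of $n$-vertex $F^+$-free $r$-graphs, and maximizing the number of edges over the smaller class can only give at most what one obtains over the larger class.

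The only genuine obstacle is careful bookkeeping with the definitions: I must ensure that the core vertices selected by the Berge-$F$ structure are distinct and that the covering hyperedges, being cliques of $G$, really do deliver the required edges of $F$. Once this is verified, both inequalities drop out immediately.
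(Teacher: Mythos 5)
Your proposal is correct and follows essentially the same route as the paper: the first inequality via the $r$-graph whose edges are the vertex sets of $r$-cliques in an extremal $F$-free graph, and the second by observing that $F^+$ is itself a member of the family Berge-$F$. No issues.
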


One of the main questions that we consider in this work is the relationship 
between these functions for different graphs $F$.  
We will see that in some cases, all three are asymptotically equivalent, while in others they exhibit different 
asymptotic behavior.  In light of the Erd\H{o}s-Stone Theorem, it is not too surprising that the 
chromatic number of $F$ plays a crucial role.    
When $\chi (F) > r$ (the so-called nondegenerate case) we have the following known result
which was stated in \cite{mv survey}.
We provide a proof in Section \ref{section nondegenerate} for completeness.  Given two functions 
$f,g : \mathbb{N} \rightarrow \mathbb{R}$, we write $f \sim g$ if $\lim \frac{ f(n) }{ g(n) } = 1$.    

\begin{theorem}\label{non degenerate thm}
Let $k > r \geq 2$ be integers and $F$ be a graph.  If $\chi (F) = k$, then 
\[
\ex(n , K_r , F) \sim \ex_r ( n , \textup{Berge-}F ) \sim \ex_r ( n , F^+ )  \sim \binom{k-1}{r} \left( \frac{n }{k-1} \right)^r.
\]
\end{theorem}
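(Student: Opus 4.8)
The plan is to prove the displayed chain by establishing the lower bound for all three quantities simultaneously and then proving a matching upper bound only for the largest one, $\ex_r(n,F^+)$. For the lower bound, Theorem \ref{alon shikhelman} gives $\ex(n,K_r,F) = (1+o(1))\binom{k-1}{r}(n/(k-1))^r$, and Proposition \ref{simple ineq} shows $\ex(n,K_r,F) \le \ex_r(n,\textup{Berge-}F) \le \ex_r(n,F^+)$; hence each of the three functions is at least $(1-o(1))\binom{k-1}{r}(n/(k-1))^r$. It therefore remains only to prove that $\ex_r(n,F^+) \le (1+o(1))\binom{k-1}{r}(n/(k-1))^r$, since this together with Proposition \ref{simple ineq} squeezes all three to the common value. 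The case $r=2$ is the classical Erd\H{o}s--Stone theorem, so I would assume $r \ge 3$.

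For the upper bound I would fix $\delta > 0$, take an $F^+$-free $r$-graph $H$ on $n$ vertices, and pass to an auxiliary graph. Call a pair $\{u,v\}$ \emph{heavy} if its codegree $d_H(\{u,v\})$ is at least $\delta n^{r-2}$, and let $G$ be the graph on $V(H)$ whose edges are the heavy pairs. First I would clean: delete every edge of $H$ that contains a light pair. There are at most $\binom{n}{2}$ light pairs, each lying in fewer than $\delta n^{r-2}$ edges, so fewer than $\delta n^{r}$ edges are deleted. Every surviving edge has all $\binom{r}{2}$ of its pairs heavy, hence spans a copy of $K_r$ in $G$, and distinct edges span distinct copies; writing $N$ for the number of copies of $K_r$ in $G$, this yields $e(H) \le N + \delta n^{r}$.

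The heart of the argument is to show that $G$ is $F$-free. Suppose instead that $G$ contained a copy of $F$ on a vertex set $W$ with $|W| = |V(F)|$. I would build a copy of $F^+$ in $H$ greedily: processing the edges of this copy one at a time, for each edge $\{u,v\}$ I would select an $(r-2)$-set $S$ with $\{u,v\}\cup S \in E(H)$ such that $S$ is disjoint from $W$ and from all previously chosen expansion sets. Since $\{u,v\}$ is heavy it admits at least $\delta n^{r-2}$ such extensions, whereas the number of extensions meeting the at most $|V(F)| + (r-2)|E(F)|$ forbidden vertices is at most a constant times $\binom{n-3}{r-3} = O(n^{r-3}) = o(n^{r-2})$; hence for $n$ large a valid choice always exists. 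The resulting disjoint expansion sets produce an $F^+ \subseteq H$, contradicting $F^+$-freeness. Therefore $G$ is $F$-free, so $N \le \ex(n,K_r,F)$, and combining with the cleaning bound gives
\[
e(H) \;\le\; \ex(n,K_r,F) + \delta n^{r} \;=\; (1+o(1))\binom{k-1}{r}\Bigl(\tfrac{n}{k-1}\Bigr)^{r} + \delta n^{r}
\]
by Theorem \ref{alon shikhelman}. Dividing by $n^{r}$, letting $n \to \infty$ and then $\delta \to 0$ yields the desired upper bound.

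The main obstacle I anticipate is the tension in choosing the codegree threshold, which the iterated limit is designed to resolve. A constant threshold would keep the cleaning loss in Step A down to $o(n^{r})$ but is too weak to drive the greedy embedding, since all edges through a heavy pair could funnel through a few already-used vertices. The threshold $\delta n^{r-2}$ fixes the embedding because the $O(n^{r-3})$ extensions blocked by the constantly many forbidden vertices are negligible against the $\delta n^{r-2}$ available; the price is that cleaning now discards a full $\delta$-fraction of the edges, which would cost a constant factor were it not for taking $n \to \infty$ before $\delta \to 0$. Making this interplay precise---and verifying that the greedy selection genuinely produces distinct expansion vertices disjoint from the core $W$, as required by the definition of $F^+$---is the step that needs the most care.
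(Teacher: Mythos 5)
Your proof is correct, but it takes a genuinely different route from the paper. The paper also reduces everything to an upper bound on $\ex_r(n,F^+)$ via Theorem \ref{alon shikhelman} and Proposition \ref{simple ineq}, but its upper bound goes through the complete graph: it quotes the Mubayi--Pikhurko asymptotic for $\ex_r(n,K_k^+)$ and then proves $\ex_r(n,F^+)\le \ex_r(n,K_k^+)+\epsilon n^r$ by supersaturation (finding $cn^m$ copies of $K_k^+$), followed by two applications of Erd\H{o}s's theorem on complete $r$-partite $r$-graphs to locate an expansion of $K_k(f)\supseteq F$. You instead prove the clean inequality $\ex_r(n,F^+)\le \ex(n,K_r,F)+\delta n^r$ directly, via the codegree-threshold cleaning at level $\delta n^{r-2}$ and a greedy embedding showing the heavy-pair graph is $F$-free; the greedy step is sound since the $O(n^{r-3})$ blocked extensions are dominated by the $\delta n^{r-2}$ available ones for large $n$, and the constructed expansion sets are indeed pairwise disjoint and avoid the core. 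Your approach is more self-contained (no supersaturation, no hypergraph K\H{o}v\'ari--S\'os--Tur\'an, no Mubayi--Pikhurko), and the intermediate inequality $\ex_r(n,F^+)\le \ex(n,K_r,F)+o(n^r)$ holds for \emph{every} graph $F$, not just the nondegenerate ones, which is a nice bonus; the trade-off is that your upper bound leans on Alon--Shikhelman's Theorem \ref{alon shikhelman} for its numerical value, whereas the paper's argument needs that theorem only for the lower bound and instead imports the exact expansion result for cliques.
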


When $\chi (F) \leq r$ (the so-called degenerate case), we have the following.  

\begin{theorem}\label{degenerate thm}
Let $r \geq k \geq 3$ be integers.  If $F$ is a graph with $\chi (F) = k$, then 
\[
\ex_r (n ,F^+) = o(n^r).
\]
\end{theorem}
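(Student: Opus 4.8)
The plan is to deduce the bound from the classical theorem of Erd\H{o}s on Tur\'{a}n numbers of complete $r$-partite $r$-uniform hypergraphs, by showing that every $F^+$ embeds into a sufficiently large such hypergraph. Write $K^{(r)}_{t,\dots,t}$ for the complete $r$-partite $r$-graph whose vertex set is partitioned into $r$ classes $W_1,\dots,W_r$, each of size $t$, and whose edges are exactly the transversal $r$-sets (those meeting every class in precisely one vertex). Erd\H{o}s proved $\ex_r(n,K^{(r)}_{t,\dots,t}) = O(n^{\,r - t^{1-r}})$, which is $o(n^r)$ for every fixed $t \ge 2$. Thus it suffices to establish the claim: with $t = |V(F)| + |E(F)|$ (so $t \ge 3$ here, since $\chi(F) \ge 3$ forces $F$ to have an edge), the hypergraph $K^{(r)}_{t,\dots,t}$ contains a copy of $F^+$. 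Granting the claim, any $F^+$-free $r$-graph is $K^{(r)}_{t,\dots,t}$-free, whence $\ex_r(n,F^+) \le \ex_r(n,K^{(r)}_{t,\dots,t}) = o(n^r)$.

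To prove the claim I would use the hypothesis $\chi(F) = k \le r$ directly. Fix a proper coloring $c : V(F) \to \{1,\dots,r\}$ and embed $F^+$ into $K^{(r)}_{t,\dots,t}$ as follows. First map the core vertices: send each $v \in V(F)$ to a vertex $\phi(v) \in W_{c(v)}$, choosing the images within each class to be distinct. Next handle the edges: for an edge $e = uv$ of $F$, the colors $c(u) = i$ and $c(v) = j$ are distinct because $c$ is proper, so $\phi(u) \in W_i$ and $\phi(v) \in W_j$ already lie in two different classes. For the expansion set $S_e$ (of size $r-2$) I place exactly one new vertex into each of the remaining $r-2$ classes $W_\ell$ with $\ell \notin \{i,j\}$. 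Then $\{\phi(u),\phi(v)\} \cup S_e$ is a transversal $r$-set, hence an edge of $K^{(r)}_{t,\dots,t}$, and it contains the edge $\{\phi(u),\phi(v)\}$ as required.

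The only thing to verify is that all of these vertices can be chosen distinct, which is where the value of $t$ enters. In a fixed class $W_\ell$ I use at most one vertex per core vertex of color $\ell$, together with at most one expansion vertex per edge of $F$, so I need at most $|V(F)| + |E(F)| = t$ distinct vertices in each class, exactly the number available. Choosing the expansion sets greedily, edge by edge, then keeps each $S_e$ disjoint from all previously used vertices, so $\phi$ is injective and the expansion vertices are pairwise disjoint and disjoint from the core. Since distinct edges of $F$ receive disjoint expansion sets, the resulting $r$-edges are distinct, yielding a genuine copy of $F^+$. The main point of the argument is precisely this embedding step, and in particular the observation that $\chi(F) \le r$ is exactly what lets the $r$ color classes play the role of the $r$ parts of $K^{(r)}_{t,\dots,t}$; all of the counting is absorbed into the black-box bound of Erd\H{o}s. (The hypothesis $\chi(F) \le r$ is essential: when $\chi(F) > r$ the two endpoints of some edge of $F$ cannot be separated across distinct parts, consistent with the positive-density behavior recorded in Theorem~\ref{non degenerate thm}.)
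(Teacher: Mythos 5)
Your proof is correct and follows essentially the same route as the paper: apply Erd\H{o}s's theorem on complete $r$-partite $r$-graphs and then embed $F^+$ into the resulting $K^r(t)$, with the hypothesis $\chi(F)\le r$ used to spread the core across the $r$ parts. The only difference is cosmetic --- the paper first builds the larger object $K_r(f)^+$ inside a $K^r(x)$ with $x=\binom{r}{2}f^2+f$ and then uses $F^+\subseteq K_r(f)^+$, whereas you embed $F^+$ directly via a proper $r$-coloring, which lets you take the smaller part size $t=|V(F)|+|E(F)|$.
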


It is important to mention that our proofs of Theorem \ref{non degenerate thm} and Theorem 
\ref{degenerate thm} rely heavily on a well-known theorem of Erd\H{o}s (see Theorem \ref{erdos kst} in Section \ref{notation}).    

In the case that $\chi (F) \leq r$, the asymptotic equivalence between these three extremal functions  
need not hold.  As an example, let 
us consider $K_{2,t}$.  In \cite{as}, it is shown that for every fixed $t \geq 2$,
\[
\ex( n , K_3 , K_{2,t} ) = \left( \frac{1}{6} + o(1) \right) (t - 1)^{3/2} n^{3/2}
\]
as $n$ tends to infinity.  However, $\ex_3 (n , \textup{Berge-}K_{2,2} ) \geq \left( \frac{1}{ 3 \sqrt{3} } - o(1) \right) n^{3/2}$ (see for instance Theorem 
5 in \cite{gp}).  Therefore, 
\[
\ex( n , K_3 , K_{2,2} ) \nsim \ex_3 (n , \textup{Berge-}K_{2,2} )
\]
The next result implies that $\ex_3 ( n , \textrm{Berge-}K_{2,t} )$ and $\ex(n , K_3 , K_{2,t} )$ have  
the same order of magnitude for all $t \geq 2$.

\begin{theorem}\label{no k2t}
If $r \geq 3$ and $t \geq r - 1$ are integers, then 
\[
\ex_r ( n , \textup{Berge-}K_{2,t} ) \leq \left( \frac{ r - 1}{t} \binom{t}{r - 1} + 2t + 1 \right) \ex(n , K_{2,t}).
\]
\end{theorem}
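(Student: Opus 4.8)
The plan is to pass from the hypergraph $H$ to an ordinary graph and then invoke the bound $\ex(n,K_{2,t})$. Assume $H$ is an $r$-graph on $n$ vertices with no Berge-$K_{2,t}$, and let $G$ be its $2$-shadow: the graph on $V(H)$ in which $xy$ is an edge precisely when some hyperedge of $H$ contains both $x$ and $y$. The whole argument rests on two tasks: (i) showing that $G$, after deleting a controlled set of edges, is $K_{2,t}$-free, so that its edge count is at most $\ex(n,K_{2,t})$; and (ii) charging the hyperedges of $H$ to the edges of $G$ so that each edge receives at most $\frac{r-1}{t}\binom{t}{r-1}+2t+1$ of them. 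Combining (i) and (ii) yields the stated inequality.

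For (i) I would fix two vertices $u,v$ and examine their common neighbours in $G$. A common neighbour $w$ comes with a hyperedge $e_w\ni u,w$ and a hyperedge $f_w\ni v,w$, but these need not be distinct across different $w$, which is exactly why $G$ itself need not be $K_{2,t}$-free: a sunflower with core $\{u,v\}$ already produces many common neighbours while containing no Berge-$K_{2,t}$. The key observation is a defect/Hall argument: any single hyperedge through $u$ contains at most $r-1$ of the candidate neighbours, so if $u$ and $v$ have enough common neighbours for which the two connecting hyperedges can be chosen distinct, then one may select $t$ of them together with $2t$ pairwise distinct representing hyperedges, producing a Berge-$K_{2,t}$ and a contradiction. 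This caps, for every pair $\{u,v\}$, the number of such \emph{good} common neighbours by a quantity proportional to $t-1$, the factor $r-1$ entering through the per-hyperedge multiplicity; quantitatively this is what converts the naive count into the binomial factor $\frac{r-1}{t}\binom{t}{r-1}=\binom{t-1}{r-2}$.

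The remaining common neighbours are \emph{bad}: each is served by a single hyperedge containing $u$, $v$, and $w$ simultaneously, that is, they come from pairs of large codegree. For task (ii) I would charge every hyperedge to a pair inside it of small codegree whenever one exists, so that each edge of $G$ absorbs only boundedly many hyperedges; the hyperedges not charged this way are precisely those trapped inside high-codegree pairs, and a minimum-degree cleaning, iteratively discarding vertices and edges that can lie in too few Berge-structures, disposes of them at the cost of the additive $2t+1$. After deleting from $G$ the bounded set of edges responsible for the \emph{bad} common neighbours, the resulting graph $G'$ is $K_{2,t}$-free, so $|E(G')|\le \ex(n,K_{2,t})$, and the charging gives $|E(H)|\le\left(\frac{r-1}{t}\binom{t}{r-1}+2t+1\right)\ex(n,K_{2,t})$.

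The main obstacle is the distinctness of the $2t$ representing hyperedges: finding $t$ common neighbours of $u$ and $v$ is easy, but upgrading them to an honest Berge-$K_{2,t}$ requires a system of distinct representatives, and one must simultaneously prevent a single hyperedge or a single high-codegree pair from being over-counted. Balancing these two requirements, the Hall-type selection against the codegree bookkeeping, is where the hypothesis $t\ge r-1$ enters and where the precise value of the constant is forced.
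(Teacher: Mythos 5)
Your overall frame---pass to the shadow, split pairs by codegree, and use a distinct-representatives argument to turn enough common neighbours into a Berge-$K_{2,t}$---matches the spirit of the paper, and your observation about sunflowers correctly identifies why the shadow itself need not be $K_{2,t}$-free. But there are two genuine gaps. First, the central difficulty is the set of hyperedges \emph{all} of whose pairs have large codegree, and your proposal disposes of these with an unspecified ``minimum-degree cleaning'' said to cost only the additive $2t+1$. No such cleaning is available; in the paper these edges are exactly where the work happens. One passes to a $d_t$-full subgraph $H'$ of the surviving hypergraph (Full Subgraph Lemma, with $d_t=\frac{r-1}{t}\binom{t}{r-1}+1$), notes that its shadow $F'$ is $K_{2,t}$-free so every link $\Gamma_{F'}(v)$ is $K_{1,t}$-free, and double-counts $(r-1)$-cliques in that link: $K_{1,t}$-freeness caps their number at $\frac{d_{F'}(v)}{t}\binom{t}{r-1}$ (Lemma \ref{counting clique lemma}), while $d_t$-fullness forces at least $\frac{1}{r-1}d_{F'}(v)\,d_t$ of them. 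The contradiction shows $H'$ is empty, and this clique count---not any Hall-type selection---is the source of the factor $\frac{r-1}{t}\binom{t}{r-1}=\binom{t-1}{r-2}$. Your proposal contains no counterpart of this step.

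Second, your task (i) as stated cannot deliver the claimed constant. A Hall/greedy upgrade of common neighbours to an honest Berge-$K_{2,t}$ only triggers once a pair has on the order of $t(r-1)$ ``good'' common neighbours (one hyperedge through $u$ can represent up to $r-1$ of the $w$'s), so the cleaned shadow is only $K_{2,C}$-free for some $C>t$, and you end up bounding $e(H)$ by a multiple of $\ex(n,K_{2,C})$, which is larger than $\ex(n,K_{2,t})$ by a constant factor and does not give the stated inequality. The paper sidesteps this by using the distinct-representatives argument only in the weak form of Lemma \ref{bounded codegree}: any copy of $K_{2,t}$ in the shadow contains a pair of codegree $<2t=e(K_{2,t})$. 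That pair is deleted (losing at most $2t$ hyperedges, whence the $2t$ in the constant) and recorded in an auxiliary graph, and the collection of recorded pairs is itself $K_{2,t}$-free because each comes with a private hyperedge of $H$. If you want to salvage your outline, replace the per-pair neighbourhood cap with this iterative deletion, and replace the ``cleaning'' with the full-subgraph/clique-counting contradiction.
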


We note that during the preparation of this manuscript we became aware of a very similar bound on $\ex_r ( n , \textup{Berge-}K_{2,t} )$ given in a preprint of Gerbner, Methuku and Vizer \cite{ge me vi}.  The result of \cite{ge me vi} gives a better constant 
than the one provided by Theorem \ref{no k2t}, and shows that for all $t \geq 7$, 
\[
\ex (n , K_3 , K_{2,t} ) \sim \ex_3 ( n , \textup{Berge-}K_{2,t} ).
\]

On the other hand, by taking all $\binom{n-1}{2}$ triples that contain a fixed element
we get a $3$-graph with $\Omega(n^2)$ edges that contains no $K_{2,t}^+$.  For more 
on the Tur\'{a}n number of Berge-$K_{2,t}$, see \cite{ge me vi, craig}.  

In the case that $3 \leq r \leq s \leq t$, we have the following upper bound which is a consequence of a more general result 
that is proved in Section \ref{upper bounds subsection}.

\begin{theorem}\label{no kst}
For $3 \leq r \leq s \leq t$ and sufficiently large $n$,  
\[
\ex_r ( n , \textup{Berge-}K_{s,t} ) = O ( n^{ r - \frac{r (r - 1) }{2s} } ).
\]
\end{theorem}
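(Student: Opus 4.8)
The plan is to prove the bound by induction on the uniformity $r$, using the Kővári--Sós--Turán theorem (the case $r=2$, which is essentially Theorem~\ref{erdos kst}) both as the base case and as the engine of the inductive step. Throughout, fix $s\le t$, let $H$ be an $n$-vertex $r$-graph with no Berge-$K_{s,t}$, and write $m=|E(H)|$. For $r=2$ the claimed exponent is $2-\tfrac{2\cdot 1}{2s}=2-\tfrac1s$, which is exactly $\ex(n,K_{s,t})=O(n^{2-1/s})$, so the base case is immediate. The inductive step combines two ingredients: a Kővári--Sós--Turán-type double count relating $H$ to its $(r-1)$-shadow, and a descent that controls the size of that shadow by the inductive hypothesis. (One could instead invoke Proposition~\ref{simple ineq} and bound the expansion $K_{s,t}^+$, but I would argue directly on Berge copies.)

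For the double count, form the bipartite graph $\Gamma$ with one part $V(H)$ and the other part $E(\partial H)$, where $\partial H$ is the $(r-1)$-shadow of $H$ (the $(r-1)$-sets lying in some edge), joining a vertex $v$ to an $(r-1)$-set $S$ whenever $S\cup\{v\}\in E(H)$; then $e(\Gamma)=r\,m$. The point is that a $K_{s,\tau}$ in $\Gamma$, with the $s$ vertices on the $V(H)$-side and $\tau$ sets on the other side, yields a Berge-$K_{s,t}$ in $H$ once $\tau=\tau(r,t)$ is large enough: taking the $s$ vertices as one part and selecting one core vertex from each $(r-1)$-set as the other part, each edge $\{v_i\}\cup S_j$ of $H$ contains the required pair, and since each edge of $H$ can arise in at most $r$ of these ways one recovers the $st$ \emph{distinct} hyperedges demanded by a Berge copy via the defect form of Hall's theorem. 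Hence $\Gamma$ is $K_{s,\tau}$-free, and the standard Kővári--Sós--Turán count (bound $\sum_{S}\binom{d_\Gamma(S)}{s}<\tau\binom ns$ and apply convexity) gives
\[
m^{s}\;\lesssim_{r,\tau}\; n^{s}\,\bigl|E(\partial H)\bigr|^{\,s-1}.
\]

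It remains to bound $|E(\partial H)|$. By an analogous pull-back argument, a Berge-$K_{s,\tau}$ in the $(r-1)$-graph $\partial H$ yields a Berge-$K_{s,t}$ in $H$ for suitable $\tau=\tau(r,t)$ (again because at most $r$ of the participating $(r-1)$-sets lie in a common edge of $H$, so distinct representatives can be chosen after inflating $t$ to $\tau$). Thus $\partial H$ has no Berge-$K_{s,\tau}$, and applying the induction hypothesis to the $(r-1)$-graph $\partial H$ gives $|E(\partial H)|=O\bigl(n^{\,(r-1)-\binom{r-1}{2}/s}\bigr)$. Substituting this into the displayed inequality and simplifying, the inductive saving $\binom{r-1}{2}/s$ and the top-level saving $(r-1)/s$ \emph{add}; since $\binom r2=\binom{r-1}{2}+(r-1)$, this produces the exponent $r-\binom r2/s$, up to a lower-order correction of size $O(1/s^{2})$ that I address below.

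The main obstacle is twofold. The qualitative difficulty is the distinctness built into the definition of Berge-$K_{s,t}$: a complete bipartite pattern in $\Gamma$ or in $\partial H$ does not automatically come equipped with $st$ distinct hyperedges, and the Hall/SDR arguments above---justified because each edge of $H$, having only $r$ vertices, covers at most $\binom r2$ pairs---are precisely what repair this, at the cost of inflating $t$ to some $\tau(r,t)$. The quantitative difficulty is making the two savings telescope to \emph{exactly} $\binom r2/s$: the crude combination above leaks the $O(1/s^{2})$ term in the exponent, so to obtain the stated bound one must either sharpen the convexity step or, more cleanly, replace the two-step descent by a single Kővári--Sós--Turán count on an $r$-partite auxiliary structure whose parts simultaneously encode all $\binom r2$ pairs inside an edge. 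This bookkeeping, together with pinning down an admissible value of $\tau$, is the technical heart of the argument; the remaining estimates are routine.
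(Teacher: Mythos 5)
Your proposal does not close: by your own accounting, the two-step descent yields the exponent $r-\binom{r}{2}/s+\binom{r-1}{2}/s^{2}$, which is strictly weaker than the claimed $r-\frac{r(r-1)}{2s}$ for every $r\ge 3$. The source of the leak is structural, not a matter of sharpening convexity: when you pass to the $(r-1)$-shadow you can only assert that it avoids Berge-$K_{s,\tau}$ (saving a factor with denominator $s$ at the lower level), whereas what is needed for the savings to telescope is a forbidden configuration with parameter $s-1$ at the next level down. Deferring the fix to ``a single K\H{o}v\'ari--S\'os--Tur\'an count on an $r$-partite auxiliary structure'' names the missing idea without supplying it, so the stated bound is not proved. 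In addition, both pull-back steps (extracting $st$ \emph{distinct} hyperedges and $t$ distinct second-part vertices from a $K_{s,\tau}$ in $\Gamma$, and lifting a Berge-$K_{s,\tau}$ from $\partial H$ to $H$) are only sketched; they are plausibly repairable by inflating $\tau$, but the Hall-condition verification (e.g.\ that $k$ distinct $(r-1)$-sets cover enough vertices) is part of what you would need to write down.

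For comparison, the paper avoids the recursion entirely. It passes to a $d$-full subhypergraph $H'$ (every pair of vertices in the $2$-shadow lies in at least $d$ edges, with $d\approx n^{r-\theta}/\ex(n,K_{s,t})$), notes that the $2$-shadow $G=\partial H'$ is $K_{s,t}$-free, and then, for a vertex $v$ of $G$ of below-average degree, double counts copies of $K_{r-1}$ in the link $\Gamma_G(v)$: fullness gives the lower bound $d_G(v)\,d/(r-1)$, while the upper bound is the Alon--Shikhelman estimate $\ex(d_G(v),K_{r-1},K_{s-1,t})=O(m^{\,r-1-\binom{r-1}{2}/(s-1)})$, applicable because $\Gamma_G(v)$ is $K_{s-1,t}$-free. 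The denominator $s-1$ there is exactly what makes the exponents combine to $r-\binom{r}{2}/s$ via the identity $(2-1/s)i-i+1=r-\frac{r(r-1)}{2s}$ with $i=r-1-\frac{(r-1)(r-2)}{2(s-1)}$. If you want to salvage your inductive scheme, you would need the analogous gain of ``$s-1$ in the denominator'' at the lower level, which your shadow argument does not provide.
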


As for lower bounds, we use Projective Norm Graphs and a simple probabilistic argument to construct graphs 
with no $K_{s,t}$, but many copies of $K_r$.    

\begin{theorem}\label{4 lower bound}
Let $s \geq 3$ be an integer.  If $q$ is an even power of an odd prime, then
\[
\ex( 2q^s , K_4 , K_{s + 1  , (s - 1)! + 2} ) \geq \left(\frac{1}{4} - o(1) \right)  q^{3s - 4} .
\]
\end{theorem}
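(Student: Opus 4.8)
\emph{The plan.} I will take the extremal graph to be a projective norm graph of Alon, R\'onyai and Szab\'o and read the copies of $K_4$ off of it. Let $N\colon \mathbb{F}_{q^{s-1}}\to\mathbb{F}_q$ be the norm map $N(Z)=Z^{(q^{s-1}-1)/(q-1)}$, and let $H$ be the graph with vertex set $\mathbb{F}_{q^{s-1}}\times\mathbb{F}_q^{*}$ in which $(A,a)$ and $(B,b)$ are adjacent precisely when $N(A+B)=ab$. Then $H$ has $(q-1)q^{s-1}<2q^{s}$ vertices, each of degree $(1+o(1))q^{s-1}$, so it has $(1+o(1))\tfrac12 q^{2s-1}$ edges; adding isolated vertices to reach exactly $2q^{s}$ vertices changes neither its cliques nor its forbidden subgraphs. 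By the Alon--R\'onyai--Szab\'o theorem, $H$ contains no copy of $K_{s,(s-1)!+1}$. The first point is that this is already enough: any copy of $K_{s+1,(s-1)!+2}$ contains a copy of $K_{s,(s-1)!+1}$ (keep $s$ vertices on one side and $(s-1)!+1$ on the other), so $H$ is automatically $K_{s+1,(s-1)!+2}$-free. This is exactly why the norm graph is indexed by $s$ and not by $s+1$: the index-$(s+1)$ graph would only avoid $K_{s+1,s!+1}$, which is a weaker condition than we need.

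\emph{Counting triangles.} The only structural fact about $H$ I will need is its codegree, which is clean. Given an edge $\{(A,a),(B,b)\}$, a common neighbour $(C,c)$ must satisfy $N(A+C)=ac$ and $N(B+C)=bc$; dividing these gives
\[
N\!\left(\frac{A+C}{B+C}\right)=\frac ab ,
\]
and as $C$ runs over $\mathbb{F}_{q^{s-1}}\setminus\{-A,-B\}$ the ratio $(A+C)/(B+C)$ runs bijectively over $\mathbb{F}_{q^{s-1}}\setminus\{1\}$. Hence the number of admissible $C$ is $|N^{-1}(a/b)|=(q^{s-1}-1)/(q-1)=(1+o(1))q^{s-2}$, and $c$ is then forced. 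So every edge has codegree $(1+o(1))q^{s-2}$, and summing over edges the number of triangles in $H$ is $(1+o(1))\tfrac16 q^{3s-3}$.

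\emph{From triangles to $K_4$'s, and the main obstacle.} Each copy of $K_4$ consists of an edge $uv$ together with an edge lying inside the common neighbourhood $W_{uv}=N(u)\cap N(v)$ (which has size $(1+o(1))q^{s-2}$). Writing out the six adjacencies, a $K_4$ on first coordinates $A_1,A_2,A_3,A_4$ exists exactly when the three opposite products of norms coincide,
\[
N(A_1+A_2)N(A_3+A_4)=N(A_1+A_3)N(A_2+A_4)=N(A_1+A_4)N(A_2+A_3),
\]
subject to a quadratic-residue condition satisfied by a positive proportion of tuples (it fixes the $\mathbb{F}_q^{*}$-coordinates up to a global sign). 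Counting \emph{all} solutions of this system would give the true order $q^{4s-6}$, but this requires point-counting on the associated variety via Weil/Lang--Weil estimates. The main obstacle is to avoid that machinery: for the stated bound it is enough to produce $\gtrsim q^{3s-4}$ solutions directly, each of which is an edge inside some common neighbourhood and hence a copy of $K_4$. I plan to do this by a simple probabilistic/averaging argument that restricts the four points to a well-chosen low-dimensional family on which the two multiplicative relations hold identically and can be enumerated by elementary means. This deliberately loses a factor of $q^{s-2}$ against the truth, and the one delicate point will be to organize the elementary count so that it delivers the precise leading constant $\tfrac14$.
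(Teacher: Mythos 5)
Your proposal diverges from the paper's construction in an essential way, and the divergence is exactly where the gap lies. The paper does not count copies of $K_4$ inside a single projective norm graph. It builds a hybrid graph: two vertex classes $A$ and $B$ of size $q^s$, a \emph{bipartite} norm graph $H^b(s,q)$ placed between them, and an independent random copy of a $K_{2,2}$-free graph ($R_{q^{s/2}}$, giving edge probability $q^{-s/2}$) inside each class. A $K_4$ then arises from a $K_{2,2}$ across the bipartition together with one edge inside each class; the $K_{2,2}$'s across the bipartition are counted by an elementary convexity argument (there are $(1/4-o(1))q^{4s-4}$ of them), so the expected number of $K_4$'s is $(1/4-o(1))q^{4s-4}\cdot q^{-s}=(1/4-o(1))q^{3s-4}$. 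The $K_{s+1,(s-1)!+2}$-freeness is deduced from a short case analysis (Lemma \ref{lemma 1}) combining the $K_{2,2}$-freeness of the two induced parts with the $K_{s,(s-1)!+1}$-freeness of the bipartite part. Your observation that a single norm graph is already $K_{s+1,(s-1)!+2}$-free, because any $K_{s+1,(s-1)!+2}$ contains a $K_{s,(s-1)!+1}$, is correct and would make that half of the argument trivial.

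The gap is that the entire content of the theorem is the lower bound on the number of $K_4$'s, and your proposal does not prove it. You correctly reduce the problem to counting tuples satisfying
\[
N(A_1+A_2)N(A_3+A_4)=N(A_1+A_3)N(A_2+A_4)=N(A_1+A_4)N(A_2+A_3)
\]
together with a square condition, you correctly note that a full count would require Weil/Lang--Weil machinery, and then you state that you ``plan to'' produce $\gtrsim q^{3s-4}$ solutions by restricting to a ``well-chosen low-dimensional family'' --- but no such family is exhibited, and it is far from clear that one exists which can be enumerated elementarily, let alone with leading constant exactly $\tfrac14$. The triangle count $\tfrac16 q^{3s-3}$ buys you nothing by itself: a triangle-rich graph can be $K_4$-free, and for $s=3$ the norm graph is $K_{3,3}$-free, so each triangle extends to at most two $K_4$'s and whether a positive proportion of triangles extend at all is a genuinely delicate algebraic question (the enumeration of $K_4$'s in projective norm graphs is the subject of separate work relying on Weil-type point counts). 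As written, the proof is incomplete at its central step; the paper's randomized two-part construction is precisely the device that makes the $K_4$ count elementary.
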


By Proposition \ref{simple ineq}, we have a lower bound on $\ex_4 ( 2q^2 , \textup{Berge-}K_{s+1 , (s-1)! + 2 } )$.  
In the case when $s = 3$, this lower bound that is better than the standard construction 
using random graphs.  This is discussed further in Section \ref{lower bounds section}.

Our final result concerns counting $r$-graphs with no Berge-$\mathcal{F}$ where $\mathcal{F}$ is a 
family of graphs.  
Given an $r$-graph $H$, the \emph{girth} of $H$ is the smallest $k$ such 
that $H$ contains a Berge-$C_k$.  When $k=2$, $C_2$ is the graph with two parallel edges and 
$H$ has girth at least 3 if and only if $H$ is linear.  In general, the girth 
of $H$ is at least $g$ if and only if $H$ contains no Berge-$C_k$ for $k \in \{2,3, \dots , g-1 \}$.     
One of the seminal results in this area is the asymptotic formula 
\[
\ex_3 (n , \textup{Berge-} \{ C_2 , C_3 , C_4 \} )  = \left( \frac{1}{6} +o(1) \right) n^{3/2}
\]
of Lazebnik and Verstra\"{e}te \cite{lv}.  This bound implies that there are at least 
\[
2^{ ( 1/6 + o(1) ) n^{3/2} }
\]
$n$-vertex 3-graphs with girth 5.  Our counting result provides an upper bound that 
matches this lower bound, up to a constant in the exponent, and holds for all $r \geq 2$.   

\begin{theorem}\label{counting theorem girth 5 intro}
	Let $r\geq 2$. Then there exists a constant $c_r$ such that the number of $n$-vertex $r$-graphs of girth at least 5 is 
	at most $2^{c_r n^{3/2}}$.
	\end{theorem}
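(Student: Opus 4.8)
The plan is to reduce the counting problem to the classical fact, due to Kleitman and Winston, that the number of $C_4$-free graphs on $n$ labeled vertices is $2^{O(n^{3/2})}$. The strategy is to attach to every girth-$\geq 5$ $r$-graph $H$ a short list of $C_4$-free graphs on the same vertex set $[n]$, from which $H$ can be reconstructed uniquely; since each list is drawn from a set of size $2^{O(n^{3/2})}$, the number of such $H$ is $2^{O(n^{3/2})}$ as well. The case $r=2$ is immediate, since a girth-$\geq 5$ graph is in particular $C_4$-free, so I would assume $r \geq 3$.

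First I would record the structural backbone. Let $G = \partial_2 H$ be the $2$-shadow of $H$, the graph on $[n]$ whose edges are the pairs contained in some hyperedge. Since $H$ has no Berge-$C_2$ it is linear, so distinct hyperedges are edge-disjoint in $G$ and each hyperedge spans a clique $K_r$, which I call a \emph{block}. The key lemma is a localization statement: every triangle, and more generally every $C_4$, of $G$ has all of its edges inside a single block. Indeed, if the four edges of a $C_4$ lie in four distinct blocks it is exactly a Berge-$C_4$; and if the blocks realizing the four edges are not all identical, a short case analysis produces either a Berge-$C_3$ or two hyperedges sharing two vertices (a Berge-$C_2$). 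All of these are forbidden by girth $\geq 5$, so every $C_4$ of $G$ lies inside a single block, and the same reasoning localizes triangles.

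Next I would build the encoding. For a block $e$ write its vertices in increasing label order as $a_e < b_e < c_e < \dots$. I define three graphs on $[n]$: $T$, the union over blocks of the triangle on $\{a_e, b_e, c_e\}$; $G'$, the union of the stars joining $a_e$ to the remaining $r-3$ vertices of $e$; and $G''$, the union of the stars joining $b_e$ to those same remaining vertices. Each of $T, G', G''$ is a subgraph of $G$ whose restriction to any block is acyclic or a single triangle, so by the localization lemma each is genuinely $C_4$-free (their union need not be, which is why I keep them separate). Hence the triple $(T, G', G'')$ lies in a set of size at most $\bigl(2^{O(n^{3/2})}\bigr)^3 = 2^{O(n^{3/2})}$.

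Finally I would prove that the map $H \mapsto (T, G', G'')$ is injective, which completes the proof. The blocks are recovered as the triangles of $T$ (triangles are localized, and distinct blocks give distinct triples since two blocks meet in at most one vertex), and this also reveals $a_e$ and $b_e$. A remaining vertex $x$ of $e$ is then recovered as exactly those $x$ with $\{a_e, x\} \in G'$ and $\{b_e, x\} \in G''$. The hard part is precisely this last reconstruction step, because several hyperedges may share their minimum vertex $a_e$, in which case $G'$ alone pools all of their extra vertices. This is exactly what $G''$ together with the girth hypothesis resolves: two blocks sharing a minimum must have distinct second vertices (else they meet in two points), and any spurious simultaneous membership $\{a_{e'}, x\} \in G'$ and $\{b_{e'}, x\} \in G''$ for the wrong block $e'$ would exhibit a Berge-$C_3$ on $\{a_{e'}, b_{e'}, x\}$. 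Thus the assignment of extra vertices to blocks is forced, the map is injective, and the theorem follows with a constant $c_r$ depending only on $r$.
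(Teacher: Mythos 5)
Your proof is correct, but it takes a genuinely different route from the paper. The paper does not reduce to the graph case; it re-runs the Kleitman--Winston machinery directly on the hypergraph: it peels off minimum-degree vertices, forms for each partial hypergraph $G_i$ an auxiliary graph $H_i$ joining pairs at distance two, observes that the neighborhood of a newly attached vertex must be an independent set in $H_i$, and bounds the number of such independent sets via the Kleitman--Winston independent-set lemma. You instead encode each girth-$\geq 5$ hypergraph $H$ injectively by three $C_4$-free graphs $(T,G',G'')$ on $[n]$ and invoke the classical count of $C_4$-free graphs as a black box. Your two supporting claims check out: linearity makes each pair lie in a unique block, so every triangle or $C_4$ of $\partial H$ whose edges do not all come from one block yields a Berge-$C_2$, $C_3$, or $C_4$ (hence $T$, $G'$, $G''$, being triangles resp.\ stars inside each block, are $C_4$-free); and in the decoding step, a spurious $x$ with $\{a_e,x\}\in G'$ coming from block $e_1$ and $\{b_e,x\}\in G''$ coming from block $e_2$ forces either $e_1=e_2=e$ by linearity (so $x$ really is a remaining vertex of $e$) or a Berge-$C_3$ on $\{a_e,b_e,x\}$ through the three distinct hyperedges $e,e_1,e_2$. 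What the comparison buys: your argument is shorter, conceptually cleaner, and makes the dependence on the graph-case constant explicit (indeed $c_r=3c$ is independent of $r$), but it proves only the coarse bound $2^{O(n^{3/2})}$ of Theorem \ref{counting theorem girth 5 intro}; the paper's adaptation yields the edge-parameterized estimate $f_r(n,\mathcal{B}_4,m)\leq \exp(n^{4/3}\log^3 n)(n^3/m^2)^m$ of Theorem \ref{counting theorem edges girth 5}, which is what drives the random-hypergraph application in Corollary \ref{turan number in random graph}. (If you wanted that refinement along your lines, you would need F\"uredi's $m$-dependent count of $C_4$-free graphs, tracking that $T$, $G'$, $G''$ have $3m$ and $(r-3)m$ edges respectively.)
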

This is a consequence of a more general result that is given in Section \ref{counting section}.
It was recently shown by Ergemlidze, Gy{\H{o}}ri, and Methuku \cite{ErgemETAL} that 
$\ex_3 (n ,  \textup{Berge-} \{ C_2 ,  C_4 \} )  = \left( \frac{1}{6} +o(1) \right) n^{3/2}$.  We leave it as an open 
problem to determine if Theorem \ref{counting theorem girth 5 intro} holds under the weaker assumption 
that the graphs we are counting may have a Berge-$C_3$.

%


The rest of this paper is organized as follows.  Section \ref{notation} gives the notation and some preliminary results that we will 
need.  Section \ref{general ubs} contains the proof of Theorems \ref{non degenerate thm} and \ref{degenerate thm}.  
Section \ref{kst case} focuses on the special case when $F = K_{s,t}$, while Section \ref{counting section} contains 
the proof of Theorem \ref{counting theorem girth 5 intro} and related counting results.



\section{Notation and preliminaries}\label{notation}

In this section we introduce the notation that will be used throughout the paper.  Additionally, we recall some known 
results that will be used in our arguments, and give a proof of Proposition \ref{simple ineq}.   

For a graph $G$ and a vertex $ \in V(G)$, $k_m (G)$ is the number of copies of $K_m$ in $G$ and $\Gamma_G (v)$ is 
the subgraph of $G$ induced by the neighbors of $v$.  For positive integers $r$, $m$, and $x$, 
we write $K^r (x)$ for the complete $r$-partite $r$-graph with $x$ vertices in each part.
The graph $K_m (x)$ is the complete $m$-partite graph with $x$ vertices in each part and we write $K_m$ instead of 
$K_m (1)$.  

In the previous section we defined the expansion $F^+$ of a graph.  
An important special case is when $F = K_k$ for some $k \geq 2$.  By definition,
the $r$-graph $K_k^+$ must contain a set of $k$ vertices, say $\{v_1 , \dots , v_k \}$, such that 
every pair $\{v_i , v_j \}$ is contained in exactly one edge of $K_k^+$.  We call this set the \emph{core} of $K_k^+$.  
As $k \geq 2$, the core is uniquely determined since every vertex not in the core is contained in exactly one edge and every vertex 
in the core is contained in exactly $k-1$ edges.  The $r$-graph $K_k^+$ has $\binom{k}{2}$ edges and 
$k + \binom{k}{2}(r -2)$ vertices.   


Let $H$ be an $r$-graph.  We define $\partial H$ to be the graph consisting of pairs contained in at least one $r$-edge of $H$, i.e.,
\[
\partial H = \{ \{ x , y \} \subset V(H) : \{ x , y \} \subset e ~ \textrm{for some}~ e \in H \}.
\]
Given $\{x,y \} \in \partial H$, let 
\[
d(x,y) = | \{ e \in H : \{ x ,y  \} \subset e \} |.
\]
The $r$-graph $H$ is \emph{$d$-full} if $d(x,y) \geq d$ for all $\{x,y \} \in \partial H$.  
If more than one hypergraph is present, we may write $d_H ( x,y )$ instead of $d (x,y)$ to avoid confusion.  

The first lemma is a very useful tool for Tur\'{a}n problems involving expansions (see \cite{kmv III, mv survey}).  

\begin{lemma}[Full Subgraph Lemma]
For any positive integer $d$, the $r$-graph $H$ has a $d$-full subgraph $H_1$ with 
\[
e(H_1) \geq e(H)  - (d - 1) | \partial H |.
\]
\end{lemma}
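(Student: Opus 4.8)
The plan is to obtain $H_1$ by a greedy edge-deletion process that eliminates low-codegree pairs one at a time. Set $H_0 = H$. As long as the current hypergraph $H_i$ is not $d$-full, there is a pair $p_i = \{x_i,y_i\} \in \partial H_i$ with $d_{H_i}(x_i,y_i) \leq d-1$; I would form $H_{i+1}$ by deleting from $H_i$ every edge that contains $p_i$. Since any pair in the shadow lies in at least one edge, each step removes at least one edge, so the process terminates at some hypergraph $H_1$ which, by the stopping condition, is $d$-full. This is the natural construction, and the whole content is in bounding how many edges get discarded.

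To control the number of deleted edges, the key observation is that codegrees are non-increasing under edge deletion: for every pair $\{x,y\}$ and every $i$ we have $d_{H_{i+1}}(x,y) \leq d_{H_i}(x,y)$. At step $i$ I delete exactly $d_{H_i}(p_i) \leq d-1$ edges, so it suffices to bound the number of steps. After step $i$ the chosen pair satisfies $d_{H_{i+1}}(p_i) = 0$, and by monotonicity $d_{H_j}(p_i) = 0$ for all $j > i$; hence $p_i$ can never again be a pair of positive codegree, so the chosen pairs $p_0, p_1, \dots$ are pairwise distinct. Each $p_i$ lies in $\partial H_i \subseteq \partial H$, so the number of steps is at most $|\partial H|$. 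Multiplying the per-step bound $d-1$ by the step count $|\partial H|$ shows that at most $(d-1)|\partial H|$ edges are removed in total, which gives $e(H_1) \geq e(H) - (d-1)|\partial H|$.

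The only point I would take care to state cleanly — and the step I view as the main (if modest) obstacle — is the distinctness of the chosen pairs, since it is what converts the naive ``at most $d-1$ edges per step'' bound into a global bound involving $|\partial H|$. This distinctness rests entirely on the monotonicity of codegrees: once a pair has had all its edges deleted it stays empty, so it can be charged at most once. Everything else is bookkeeping, and no appeal to the earlier results in the paper is needed.
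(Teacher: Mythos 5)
Your proof is correct and follows essentially the same greedy deletion argument as the paper: repeatedly pick a pair of codegree less than $d$, delete all edges containing it, and observe that each of the at most $|\partial H|$ distinct pairs costs at most $d-1$ edges. The paper states the step-count bound without elaboration; your justification via monotonicity of codegrees is exactly the intended reason.
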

\begin{proof}
If $H$ is not $d$-full, choose a pair $\{x,y \} \in \partial H$ for which $d(x,y) < d$.  Remove all edges that 
contain the pair $\{x,y \}$ and let $H'$ be the resulting graph.  If $H'$ is $d$-full, then we are done.  Otherwise,
 we iterate this 
process which can continue for at most $| \partial H |$ steps.  At each iteration, at most $d-1$ edges are removed.  
\end{proof}

The next simple lemma is useful for finding pairs of vertices with bounded codegree in an 
$r$-graph with no Berge-$F$.   
See Lemma 3.2 of \cite{kmv I} for a similar result.  

\begin{lemma}\label{bounded codegree}
Let $r \geq 3$ be an integer and $H$ be an $r$-graph with no Berge-$F$.  If 
$ \partial H$ contains a copy of $F$, then there is a pair of vertices $\{x,y \}$ such that 
\[
d_H( \{x , y \} ) < e(F).
\]
\end{lemma}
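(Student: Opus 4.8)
The plan is to argue by contradiction. Fix a copy of $F$ sitting inside $\partial H$, and suppose that \emph{every} pair $\{x,y\}$ that forms an edge of this copy satisfies $d_H(\{x,y\}) \geq e(F)$. Under this assumption I will construct a Berge-$F$ as a subhypergraph of $H$, contradicting the hypothesis that $H$ is Berge-$F$-free. Since the negation of the conclusion is precisely that all edges of the copy of $F$ have codegree at least $e(F)$, this will finish the proof.

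The construction of a Berge-$F$ is exactly a system of distinct representatives (SDR) problem. Recall that a Berge-$F$ subhypergraph of $H$ is the same thing as an injection $f$ from $E(F)$ into $E(H)$ with $e \subseteq f(e)$ for every edge $e$ of $F$. For each edge $e$ of the fixed copy of $F$, set
\[
A_e = \{ h \in E(H) : e \subseteq h \},
\]
the family of hyperedges of $H$ containing the pair $e$. Then $|A_e| = d_H(e) \geq e(F)$ by our assumption, and finding the required injection $f$ is exactly the same as finding an SDR for the set system $\{A_e\}_{e \in E(F)}$.

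To produce the SDR I would invoke Hall's theorem. The key (and essentially only) observation is that the set system has exactly $e(F)$ members, one for each edge of $F$, while each individual set $A_e$ has at least $e(F)$ elements. Hence for any subfamily indexed by $S \subseteq E(F)$ we have
\[
\Bigl| \bigcup_{e \in S} A_e \Bigr| \geq \max_{e \in S} |A_e| \geq e(F) \geq |S|,
\]
so Hall's condition holds trivially. The resulting SDR assigns to each edge $e$ of $F$ a distinct hyperedge $f(e) \supseteq e$, which is a Berge-$F$ in $H$, the desired contradiction. I do not expect a genuine obstacle here: the one thing to verify is Hall's condition, and it is immediate because the number of sets equals $e(F)$ and each set is at least this large. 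Note that the pair $\{x,y\}$ produced by the argument is in fact an edge of the copy of $F$.
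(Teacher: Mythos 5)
Your proof is correct and follows essentially the same route as the paper: assume every pair of the copy of $F$ in $\partial H$ has codegree at least $e(F)$ and select $e(F)$ distinct hyperedges, one containing each pair, to build a Berge-$F$ and reach a contradiction. The only cosmetic difference is that you justify the selection via Hall's theorem, whereas the paper simply notes that the distinct edges can be chosen (a greedy choice suffices, since at each step fewer than $e(F)$ hyperedges are already used); your Hall verification is valid because the number of sets is exactly $e(F)$ and each set has size at least $e(F)$.
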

\begin{proof}
Suppose $ \partial H$ contains a copy of $F$, say with edges $e_1 , \dots , e_m$ where $m = e(F)$.  
If every pair $e_i = \{ x_i ,y_i \}$ has 
\begin{equation}\label{bounded codegree eq}
d_H( \{ x_i , y_j \} ) \geq e(F),
\end{equation}
then we can choose $e(F)$ distinct edges $e_i' \in H$ for which $\{x_i , y_i \} \subset e_i'$ for 
all $1 \leq i \leq m$.  This gives a Berge-$F$ in $H$ and so 
(\ref{bounded codegree eq}) cannot hold for all $\{x_i , y_j \}$.   
\end{proof}

A consequence of Lemma \ref{bounded codegree} is that if $H$ is an $r$-graph with 
no Berge-$F$ and $H'$ is a $d$-full subgraph of $H$ with $d \geq e(F)$, then $\partial H'$ must 
be $F$-free.  Lemma \ref{bounded codegree} will be used frequently in Section \ref{upper bounds subsection}.  

Lastly, we will need the following result of Erd\H{o}s \cite{erdos kst theorem}.  

\begin{theorem}[Erd\H{o}s \cite{erdos kst theorem}]\label{erdos kst}
Let $r$ and $x$ be positive integers.  There is an $n_0 = n_0 (r,x)$ and a positive constant $\alpha_{r,x}$ such that 
for all $n > n_0$, any $n$-vertex $r$-graph with more than 
$\alpha_{r,x} n^{ r - 1/ x^{r -1 } }$ edges must contain a complete $r$-partite $r$-graph with 
$x$ vertices in each part.  
\end{theorem}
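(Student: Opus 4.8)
The plan is to prove the statement by induction on the uniformity $r$, using a double-counting argument that reduces the presence of $K^r(x)$ to the presence of $K^{r-1}(x)$ in suitable ``links.'' The base case $r=1$ is trivial: a $1$-graph is a set of singleton edges, $K^1(x)$ is simply a collection of $x$ of them, the exponent $r - 1/x^{r-1}=0$, and so the constant $\alpha_{1,x}=x-1$ works. (The case $r=2$ is the K\H{o}v\'{a}ri--S\'{o}s--Tur\'{a}n theorem, which the argument below reproduces.)

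For the inductive step, assume the result for $(r-1)$-graphs with constant $\alpha_{r-1,x}$ and threshold $n_0(r-1,x)$. Given an $r$-graph $H$ on $n$ vertices, for each $(r-1)$-set $S \subseteq V(H)$ write $N(S)=\{v : S \cup \{v\} \in H\}$ and $d(S)=|N(S)|$. Since each edge of $H$ contains exactly $r$ of its $(r-1)$-subsets, we have $\sum_S d(S) = r\,e(H)$, the sum ranging over all $(r-1)$-sets. For an $x$-set $T$, define its link $L(T)$ to be the $(r-1)$-graph consisting of all $(r-1)$-sets $S$ with $T \subseteq N(S)$; note that $L(T)$ lives on $V(H)\setminus T$. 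The key observation is that if $L(T)$ contains a $K^{r-1}(x)$ with parts $V_1,\dots,V_{r-1}$, then $V_1,\dots,V_{r-1},T$ form a $K^r(x)$ in $H$: for any transversal $\{v_1,\dots,v_{r-1}\}$ we have $S=\{v_1,\dots,v_{r-1}\}\in L(T)$, hence $S\cup\{t\}\in H$ for every $t\in T$.

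Now suppose $H$ contains no $K^r(x)$. Then no link $L(T)$ contains $K^{r-1}(x)$, so (for $n>n_0(r-1,x)$) the inductive hypothesis gives $e(L(T)) \le \alpha_{r-1,x}\,n^{(r-1)-1/x^{r-2}}$ for every $x$-set $T$. Double counting incidences between $(r-1)$-sets $S$ and $x$-subsets $T\subseteq N(S)$ yields the identity
\[
\sum_T e(L(T)) = \sum_S \binom{d(S)}{x},
\]
so the upper bound above gives $\sum_S \binom{d(S)}{x} \le \binom{n}{x}\,\alpha_{r-1,x}\,n^{(r-1)-1/x^{r-2}}$. On the other hand, convexity of $t\mapsto\binom{t}{x}$ together with $\sum_S d(S)=r\,e(H)$ gives a matching lower bound: writing $\bar{d}=r\,e(H)/\binom{n}{r-1}$ for the average degree, Jensen's inequality yields $\sum_S \binom{d(S)}{x} \ge \binom{n}{r-1}\binom{\bar d}{x}$. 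Comparing the two bounds and solving for $e(H)$, the exponents collapse exactly: the resulting inequality has the form $e(H)^x \le C\,n^{xr - 1/x^{r-2}}$, and taking $x$-th roots gives $e(H) \le C^{1/x}\,n^{r-1/x^{r-1}}$. Choosing $\alpha_{r,x}>C^{1/x}$ (and $n_0(r,x)$ large) contradicts the assumption $e(H)>\alpha_{r,x}\,n^{r-1/x^{r-1}}$, completing the induction.

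The main obstacle is the bookkeeping in the final step. First, Jensen's inequality for $\binom{t}{x}$ must be applied with care, since this function is only convex for $t\ge x-1$; this is harmless because if $e(H)$ exceeds the claimed threshold with $\alpha_{r,x}$ large, then $\bar d \gtrsim n^{1-1/x^{r-1}}\to\infty$, so $\bar d$ far exceeds $x$ and one may use $\binom{\bar d}{x}\ge c_x\,\bar d^{\,x}$. Second, one must verify that the exponents telescope correctly: substituting $\bar d \asymp e(H)/n^{r-1}$ into the lower bound produces $n^{(r-1)(1-x)}e(H)^x$, and equating with the upper-bound exponent $x+(r-1)-1/x^{r-2}$ must leave precisely $xr-1/x^{r-2}$ on $e(H)^x$, so that the $x$-th root recovers the exponent $r-1/x^{r-1}$ from $(r-1)-1/x^{r-2}$. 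Tracking the constants through this computation is what fixes $\alpha_{r,x}$ in terms of $\alpha_{r-1,x}$, $r$, and $x$.
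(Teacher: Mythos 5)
The paper does not prove this statement; it is imported as a classical theorem of Erd\H{o}s \cite{erdos kst theorem}, so there is no internal proof to compare against. Your argument is the standard (essentially Erd\H{o}s's original) proof, and it is correct: the identity $\sum_T e(L(T)) = \sum_S \binom{d(S)}{x}$, the application of the inductive hypothesis to every link $L(T)$, and the convexity lower bound all check out, and the exponents do telescope exactly as you claim, since $\bigl(rx - 1/x^{r-2}\bigr)/x = r - 1/x^{r-1}$. Two trivial points of hygiene: the case $x=1$ should be set aside separately (there $\bar d \to \infty$ fails, but $K^r(1)$ is a single edge and the statement is vacuous), and the inductive bound is applied to links on $n-x$ vertices, so one needs $n - x > n_0(r-1,x)$ and monotonicity of $n^{(r-1)-1/x^{r-2}}$ --- both harmless for $n$ large. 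Your handling of the convexity of $t \mapsto \binom{t}{x}$ via $\bar d \gtrsim n^{1-1/x^{r-1}} \gg x$ is exactly the right fix.
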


We conclude this section by providing a proof of Proposition \ref{simple ineq}.

\begin{proof}[Proof of Proposition \ref{simple ineq}]
We begin the proof by showing that the first inequality holds.
Let $G$ be an $n$-vertex graph that is $F$-free and has 
$\ex ( n , K_r , F)$ copies of $K_r$.  Let $H$ be the $r$-graph with the same vertex set as $G$, and 
an $r$-set $e$ is an edge in $H$ if and only if the vertices in $e$ form a $K_r$ in $G$.  The number of 
edges in $H$ is $\ex(n , K_r , F)$.  Suppose that 
$H$ has a Berge-$F$.  Any pair of vertices 
$\{ u , v \}$ that are contained in an edge of $H$ are adjacent in $G$.  Therefore, a Berge-$F$ in $H$ gives 
a copy of $F$ in $G$.  Namely, if $f: E(F) \rightarrow E(H)$ is an injection with the property 
that $\{x , y \} \subset f( \{ x , y \} )$ for all $\{x ,y \} \in E(F)$, then these same pairs $\{x,y \}$ for 
which $\{x , y \} \in E(F)$ are edges of a copy of $F$ in $G$.  We conclude that 
$H$ has no Berge-$F$.

The second inequality is trivial since $F^+$ is a particular Berge-$F$ and so any $r$-graph that 
has no Berge-$F$ has no $F^+$.  
\end{proof}


\section{General upper bounds}\label{general ubs}

In this section, we prove an Erd\H{o}s-Stone type result for $r$-graphs with no $F^+$. 
By Proposition~\ref{simple ineq} this gives general upper bounds on $\ex_r(n,\textrm{Berge-}F)$.
We begin with the non-degenerate case, i.e., when $\chi(F) > r$.


\subsection{Non-degenerate case and the proof of Theorem \ref{non degenerate thm}}\label{section nondegenerate}

In this section we prove Theorem~\ref{non degenerate thm}.
As mentioned  in the introduction, this result was 
stated in Mubayi and Verstra\"{e}te's survey on Tur\'{a}n problems for expansions \cite{mv survey}.  Let $F$ be a graph with chromatic number $\chi(F)=k>r$.
 By Theorem~\ref{alon shikhelman} and Proposition~\ref{simple ineq} it is enough to show that $\ex_r(n,F) ~ \sim \binom{k-1}{r} \left( \frac{n }{k-1} \right)^r.$

It was shown by Mubayi \cite{mubayi} (and later improved by Pikhurko \cite{pikhurko}) that 
\[
\ex_r ( n , K_{k}^+ ) ~ \sim \binom{k-1}{r} \left( \frac{n }{k-1} \right)^r.
\]
Therefore, in order to prove Theorem~\ref{non degenerate thm} it remains to prove the following lemma.


\begin{lemma}\label{erdos stone for expansions}
Let $k > r \geq 2$ be integers and $F$ be a graph with $f$ vertices.  If $\chi(F) = k$ and $\epsilon >0$, then for sufficiently large $n$, 
depending on $k$, $r$, $f$, and $\epsilon$, we have  
\[
\ex_r ( n , F^+ ) < \ex_r ( n , K_{k}^+ ) + \epsilon n^r.
\]
\end{lemma}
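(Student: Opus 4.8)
The plan is to prove the equivalent upper bound: every $F^+$-free $r$-graph $H$ on $n$ vertices has $e(H) \le \ex_r(n,K_k^+) + \epsilon n^r$ once $n$ is large. The engine is the elementary inequality $e(H) \le k_r(\partial H)$. Indeed, each edge of $H$ is an $r$-set all of whose pairs lie in $\partial H$, so it spans a copy of $K_r$ in $\partial H$, and distinct edges span distinct copies; hence $e(H)$ is at most the number $k_r(\partial H)$ of $K_r$'s in its shadow. If I can arrange that $\partial H$ is $F$-free, then since $\chi(F)=k>r$, Theorem~\ref{alon shikhelman} gives $k_r(\partial H) \le \ex(n,K_r,F) = (1+o(1))\binom{k-1}{r}\left(\frac{n}{k-1}\right)^r$, which by the Mubayi--Pikhurko asymptotics $\ex_r(n,K_k^+) \sim \binom{k-1}{r}\left(\frac{n}{k-1}\right)^r$ (\cite{mubayi,pikhurko}) equals $(1+o(1))\ex_r(n,K_k^+)$, and the lemma follows.

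The obstruction is that $\partial H$ need \emph{not} be $F$-free: a copy of $F$ in $\partial H$ expands to an $F^+$ in $H$ only if its pairs lie in enough distinct edges to let the $r-2$ extra vertices of each expanded edge be chosen disjointly. This is exactly what fullness provides. So I first dispose of $r=2$, where $F^+=F$ and $K_k^+=K_k$ and the assertion is literally the Erd\H{o}s--Stone theorem. For $r\ge 3$, I pass to a $d$-full subgraph $H_1\subseteq H$ via the Full Subgraph Lemma with $d := C\,n^{r-3}$ for a constant $C=C(F,r)$; the cost is at most $(d-1)\binom{n}{2} = O(n^{r-1}) = o(n^r)$ edges, so $e(H_1) \ge e(H) - o(n^r)$.

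Next I claim $\partial H_1$ is $F$-free. If not, fix a copy of $F$ in $\partial H_1$ on core vertices $v_1,\dots,v_f$ and build an $F^+$ greedily: process the $e(F)$ edges of this copy one by one, and for the pair $p$ under consideration --- which by fullness lies in at least $d$ edges of $H_1$ --- choose an edge of $H_1$ through $p$ whose remaining $r-2$ vertices avoid the constantly-many vertices already used. The number of edges through $p$ meeting this forbidden set is at most its size times the maximum codegree of a triple, and the latter is at most $\binom{n-3}{r-3}\le n^{r-3}$; taking $C$ larger than $f+e(F)(r-2)$ makes this count strictly below $d$, so a valid edge always exists. The resulting $F^+\subseteq H_1 \subseteq H$ contradicts $F^+$-freeness.

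With $\partial H_1$ now $F$-free, the engine applies: $e(H_1) \le k_r(\partial H_1) \le \ex(n,K_r,F) = (1+o(1))\binom{k-1}{r}\left(\frac{n}{k-1}\right)^r$. Combining with $e(H) \le e(H_1) + o(n^r)$ and the Mubayi--Pikhurko asymptotics gives $e(H) \le \ex_r(n,K_k^+) + o(n^r) < \ex_r(n,K_k^+) + \epsilon n^r$ for $n$ large, completing the proof. The main point to get right is the calibration of $d$: it must grow fast enough (like $n^{r-3}$) to dominate triple-codegrees in the greedy embedding, yet slowly enough that the Full Subgraph Lemma discards only $o(n^r)$ edges --- both hold comfortably for $r\ge 3$, which is precisely why the case $r=2$ is split off and handled directly by Erd\H{o}s--Stone.
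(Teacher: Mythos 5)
Your proof is correct, and it takes a genuinely different route from the paper's. The paper attacks the lemma head-on: starting from an $r$-graph with $\ex_r(n,K_k^+)+\epsilon n^r$ edges, it applies Erd\H{o}s--Simonovits supersaturation to get $\Omega(n^m)$ copies of $K_k^+$, encodes their vertex sets as an $m$-graph, and applies Theorem \ref{erdos kst} twice (once to that $m$-graph, once to the $k$-graph of cores) to assemble a full $K_k(f)^+\supseteq F^+$. You instead pass to a $d$-full subgraph with $d=\Theta(n^{r-3})$ (losing only $O(n^{r-1})$ edges), show its shadow is $F$-free by a greedy expansion embedding in which triple-codegrees ($\leq \binom{n-3}{r-3}$) are dominated by $d$, bound $e(H_1)\leq k_r(\partial H_1)$, and finish with Theorem \ref{alon shikhelman} and the Mubayi--Pikhurko asymptotics for $\ex_r(n,K_k^+)$. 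Each step checks out: the chosen expansion edges are automatically distinct because the expansion sets avoid all core vertices, the $r=2$ case is correctly split off as Erd\H{o}s--Stone, and the dependence of ``sufficiently large $n$'' on $F$ rather than $f$ is harmless since there are finitely many graphs on $f$ vertices. The trade-off is this: your argument is shorter and recycles two results the paper quotes anyway to deduce Theorem \ref{non degenerate thm}, and the $d$-full reduction is an elegant stand-in for supersaturation; but it proves the stated inequality only by first establishing $\ex_r(n,F^+)\leq(1+o(1))\binom{k-1}{r}\left(\frac{n}{k-1}\right)^r$ and then substituting the known value of $\ex_r(n,K_k^+)$, so it genuinely depends on both Alon--Shikhelman and Mubayi--Pikhurko, whereas the paper's proof is a self-contained comparison of the two Tur\'an numbers that additionally exhibits the structural witness $K_k(f)^+$.
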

\begin{proof}
Let $F$ be a graph with $f$ vertices and $\chi(F) = k$ where $ k > r \geq 2$ are integers.  Let $\epsilon > 0$ and  
$G$ be an $n$-vertex $r$-graph with 
\[
e(G) \geq \ex_r ( n , K_{k}^+ ) + \epsilon n^r.
\]
By the Supersaturation Theorem of Erd\H{o}s and Simonovits \cite{es saturation}, there is a positive constant 
$c  = c( \epsilon )$ such that 
$G$ contains at least $c n^m$ copies of $K_{k}^+$ where 
\[
m := k + \binom{k}{2} (r - 2)
\]
is the number of vertices in the $r$-graph $K_{k}^+$.
Let $Z$ be the $m$-graph with the same vertex set as $G$ where $e$ is an edge of $Z$ if and only if 
there is a $K_{k}^+$ in $G$ with vertex set $e$.

Fix a positive integer $x$ large enough so that 
\[
x^{k} \geq \binom{m}{k} \alpha_{k , f } x^{ k - 1 / f^k} ~~ \mbox{and} ~~ x > f^{k}
\]
where $\alpha_{k  ,f}$ is the constant from Theorem \ref{erdos kst}.  
Note that $x$ depends only on $r$, $k$, and $f$.  
For large enough $n$, depending on $c$ and hence $\epsilon$, we have
\[
e(Z)  \geq c n^m > \alpha_{m,x} n^{ m - \frac{1}{ x^{m - 1} } }
\]
so that $Z$ contains a $K^m (x)$, say with parts $P_1, \dots , P_m$.  Therefore, 
for any 
\[
(p_1 , \dots , p_m ) \in P_1 \times \cdots \times P_m,
\] 
there is a $K_{k}^+$ in $G$ whose vertex set is $\{p_1 , \dots , p_m \}$.

A $K_{k}^+$ must contain $k$ vertices that form the core and since 
\[
| P_1 \times \cdots \times P_m | = x^m,
\]
there are at least $x^m / \binom{m}{k}$ copies of $K_{k}^+$ whose vertex sets are the edges of $Z$, and 
whose vertices in the core come from the same set of $k$~$P_i$'s.  Without loss of generality, we 
may assume that we have $x^m / \binom{m}{k}$ copies of $K_{k}^+$ 
whose core vertices come from $k$-tuples in 
\[
P_1 \times \cdots \times P_{k}.
\]  

Let $Y$ be the $k$-partite $k$-graph with vertex set $P_1 \cup \dots \cup P_{k}$ whose edges are 
the $k$-tuples $(p_1 , \dots , p_{k}) \in P_1 \cup \dots \cup P_{k}$ for which there is a $K_{k}^+$ in $G$ 
whose vertices are an edge of $Z$, and whose core is $\{p_1 , \dots , p_{k} \}$.  Given an edge 
$(p_1 , \dots , p_{k} )$ of $Y$, there are at most $x^{m - ( k +1)}$ edges in $Z$ that 
contain $\{p_1 , \dots ,p_{k} \}$ so that 
\[
e(Y) \geq \frac{ x^m / \binom{ m }{ k} }{x^{m -  k} } = \frac{ x^{k}}{ \binom{ m }{ k} }.
\]
We have chosen $x$ large enough so that  
\[
\frac{x^{k}}{ \binom{m}{ k} } \geq \alpha_{k , f} x^{ k - 1 / f^k}
\]
holds.  By Theorem \ref{erdos kst}, $Y$ contains a $K^{k}(f)$, say with parts $R_1 , \dots , R_{k}$ where 
$R_i \subset P_i$ for $1 \leq i \leq k$.  

Let us pause a moment to recapitulate what we have so far.  For every $k$-tuple
\[
(r_1 , \dots , r_{k}) \in R_1 \times \cdots \times R_{k}
\]
and every $(m -  k )$-tuple 
\[
(p_{k+1} , \dots , p_m ) \in P_{k+1} \times \cdots \times P_m, 
\]
there is a $K_{k}^+$ in $G$ with vertex set $\{r_1 , \dots , r_{k} , p_{k+1} , \dots , p_m \}$ whose core 
is $\{r_1 , \dots , r_{k} \}$.  Since $x > f^{k}$ and each $P_i$ has $x$ vertices, we can choose 
$f^{k}$ tuples 
\[
( p_{k+1} , \dots , p_m ) \in P_{k+1} \times \cdots \times P_m 
\]
such that the corresponding 
sets are pairwise disjoint.  We then pair each one of these sets up with a 
$k$-tuple in $R_1 \times \cdots \times R_{k}$ in a 1-to-1 fashion.  Each such pairing forms a $K_{k}^+$ in $G$
and altogether, we have constructed a $K_{k}(f)^+$ in $G$.  That is, we have an expansion 
of the complete $k$-partite Tur\'{a}n graph with $f$ vertices in each part.    
As $F$ is a subgraph of $K_{k}(f)$, 
$F^+$ is a subgraph of $K_{k}(f)^+$ and so $G$ contains a copy of $F^+$.    
\end{proof}


\subsection{The degenerate case and the proof of Theorem \ref{degenerate thm}}

In this section we prove Theorem~\ref{degenerate thm}, i.e., that
 if $F$ is a graph with $\chi (F) \leq r$, then 
\[
\ex_r ( n , F^+ ) = o( n^r ).
\]
As mentioned in the introduction, the proof is based on Theorem \ref{erdos kst}.
It is an immediate corollary of the following.

\begin{theorem}
If $r \geq 3$ is a fixed integer and $F$ is a graph with $\chi (F) \leq r$, then there is a positive constant 
$C$, depending on $r$ and $F$, such that 
\[
\ex_r ( n , F^+)  \leq C n^{ r- 1 /x^{r - 1} } 
\]
where $x = \binom{r}{2} |V(F)|^2 + |V(F)|$.  
\end{theorem}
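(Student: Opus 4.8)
The plan is to bound $\ex_r(n, F^+)$ by reducing to Theorem~\ref{erdos kst}, the Erd\H{o}s theorem on complete $r$-partite $r$-graphs. The key observation is that a sufficiently large complete $r$-partite $r$-graph $K^r(x)$ automatically contains $F^+$ whenever $\chi(F) \leq r$, provided $x$ is large enough. So the strategy is: first choose the value $x = \binom{r}{2}|V(F)|^2 + |V(F)|$ carefully, then argue that any $r$-graph $H$ on $n$ vertices with more than $C n^{r - 1/x^{r-1}}$ edges contains $K^r(x)$ by Theorem~\ref{erdos kst}, and finally show that $K^r(x)$ contains a copy of $F^+$. Setting $C = \alpha_{r,x}$ from Theorem~\ref{erdos kst} then gives the contrapositive immediately: an $F^+$-free $r$-graph has at most $C n^{r - 1/x^{r-1}}$ edges.

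First I would make the embedding step precise, since this is where the value of $x$ gets pinned down. Because $\chi(F) \leq r$, I can properly color $V(F)$ with $r$ colors, inducing a partition of $V(F)$ into independent sets; embed these color classes into distinct parts $P_1, \dots, P_r$ of the $K^r(x)$, using $|V(F)|$ vertices total for the \emph{core} (the original vertices of $F$). This is why $x$ must be at least $|V(F)|$. Then each edge of $F$ joins vertices in two different parts, and to expand it into an $r$-set I must add $r-2$ fresh vertices, drawn from the $r-2$ parts not already containing its two endpoints. The crucial constraint is that the expansion sets $S_e$ for distinct edges $e$ must be pairwise disjoint and disjoint from the core. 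There are $|E(F)| \leq \binom{|V(F)|}{2}$ edges, each consuming at most $r-2$ expansion vertices, and the accounting needs to guarantee that every part $P_i$ has enough unused vertices. The bound $x = \binom{r}{2}|V(F)|^2 + |V(F)|$ is engineered to be a safe upper estimate for this: the $\binom{r}{2}$ factor accounts for the pairs of parts an edge can span, and the $|V(F)|^2$ bounds the number of edges together with their expansion demand, so that each of the $r$ parts comfortably supplies all the distinct expansion vertices required.

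The remaining steps are routine. With $x$ fixed as above (note $x$ depends only on $r$ and $F$, as required), set $C = \alpha_{r,x}$. If $H$ is an $n$-vertex $r$-graph with $e(H) > C n^{r - 1/x^{r-1}}$, then Theorem~\ref{erdos kst} guarantees (for $n$ large, which can be absorbed into $C$ by adjusting the constant) that $H$ contains a $K^r(x)$, and by the embedding argument $H$ therefore contains $F^+$. Taking contrapositives yields the claimed bound on $\ex_r(n, F^+)$.

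The main obstacle I expect is the disjointness bookkeeping in the embedding step: one must verify that all the expansion vertices $S_e$ can be chosen disjoint from each other and from the core simultaneously, across all $r-2$ of the relevant parts for each edge, without any part running short. This is purely a counting verification and is exactly what the somewhat generous choice of $x$ is designed to handle, but it is where the real content of the argument lies, so I would write it out carefully rather than wave it through. Everything else—applying Erd\H{o}s's theorem and rephrasing as an extremal bound—is standard.
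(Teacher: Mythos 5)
Your proposal is correct and follows essentially the same route as the paper: apply Erd\H{o}s's theorem to extract a $K^r(x)$ and then greedily embed the expansion inside it, with the generous choice of $x$ guaranteeing enough fresh vertices in each part for pairwise disjoint expansion sets. The only cosmetic difference is that the paper builds all of $K_r(f)^+$ (with $f = |V(F)|$) and then uses $F \subseteq K_r(f)$, whereas you embed $F^+$ directly via a proper $r$-coloring of $F$; both versions rest on the same counting and the stated $x$ suffices for either.
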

\begin{proof}
Assume that $|V(F) | = f$ so that $x = \binom{r}{2} f^2 + f$.  
Let $H$ be an $n$-vertex $r$-graph with $e(H) \geq C n^{ r- 1 /x^{r - 1} }$
where $C$ can be taken large as a function of $r$ and $F$.  We will show that $H$ contains 
a subhypergraph isomorphic to $F^+$.  

For large enough $C$, we have $e(H) > \alpha_{r,x} n^{ r - 1 / x^{r - 1} }$.
By Theorem \ref{erdos kst}, $H$ contains a $K^r (x)$.  Here $K^r (x)$ is the complete $r$-partite $r$-graph 
with $x$ vertices in each part.  
Let $W_1 , \dots , W_r$ be the parts of the $K^r (x)$ in $H$.
Partition each $W_i$ into two sets $U_i$ and $D_i$ where $|U_i| = f$ and 
$|D_i| = \binom{r}{2} f^2$.  
We are going to construct a $K_r (f)^+$ in $H$ one edge at a time.  
The vertices that lie in exactly one edge of the $K_r (f)^+$ will 
come from the sets $D_1 \cup \dots \cup  D_r$, 
and the other vertices will come from $U_1 \cup \dots \cup U_r$.  

Let $x \in U_1$ and $y \in U_2$.  Choose exactly one vertex, say $z_i$, from $D_i$ for $3 \leq i \leq r $
and make $\{x , y , z_3 , \dots , z_r \}$ an edge.  
Next we pick a new pair $x' \in U_1$ and $y' \in U_2$ and 
choose exactly one vertex, say $z_i'$, from $D_i \backslash \{ z_i \}$ for $3 \leq i \leq r$.  Make 
$\{x' , y'  , z_3 ' , \dots , z_r ' \}$ an edge.  We can continue this process and in the next round, 
we add an edge $\{x '' , y ''  , z_3 ''  , \dots , z_r ''  \}$ where 
$\{x ''  , y ''  \}$ is a new pair ($x  ''  \in U_1 , y ''  \in U_2$) and the sets 
$\{z_3 , \dots , z_r \}$, $\{z_3 '  , \dots z_r '  \}$, and $\{ z_3 ''  , \dots , z_r ''  \}$ are all pairwise disjoint.  

Since $|D_i | \geq  f^2$, we can continue this process for all pairs of vertices in $U_1$ and $U_2$.
Even more, since $|D_i | \geq \binom{r}{2} f^2$, this process can continue until we have 
considered all pairs $U_i$ and $U_j$ with $1 \leq i < j \leq r$.  
When the process is completed, we have constructed a $K_r (f)^+$ in $H$.  
Now since $F$ is a subgraph of $K_r (f)$, we have that $F^+$ is a subgraph of 
$K_r (f)^+$ and this completes the proof of the theorem.  
\end{proof}


\section{Forbidding Berge-$K_{s,t}$}\label{kst case}

In this section we investigate the special case of forbidding the Berge-$K_{s,t}$. 

\subsection{Upper bounds and the proof of Theorems \ref{no k2t} and \ref{no kst}}\label{upper bounds subsection}

 We begin with an easy lemma.  

\begin{lemma}\label{counting clique lemma}
If $2 \leq m \leq s$, then 
\[
\ex(n , K_{m} , K_{1,s} ) \leq \left( \frac{ n}{s} \right) \binom{s}{m}.
\]
\end{lemma}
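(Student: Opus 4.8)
The plan is to exploit the fact that forbidding $K_{1,s}$ is nothing more than a maximum-degree condition, and then to count copies of $K_m$ locally, around each vertex.

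First I would record the structural observation that a graph $G$ is $K_{1,s}$-free precisely when its maximum degree satisfies $\Delta(G) \le s-1$: indeed, a vertex of degree at least $s$ together with any $s$ of its neighbors already forms a copy of $K_{1,s}$, and conversely any copy of $K_{1,s}$ exhibits a vertex of degree at least $s$.

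Next I would bound the number of copies of $K_m$ through a fixed vertex $v$. Every such copy is determined by choosing $m-1$ neighbors of $v$ that, together with $v$, induce a clique; in particular each copy corresponds to some $(m-1)$-subset of $N(v)$. Hence the number of copies of $K_m$ containing $v$ is at most $\binom{d(v)}{m-1} \le \binom{s-1}{m-1}$, where I use $d(v)\le s-1$ from the first step together with $m-1 \le s-1$ (which holds since $m \le s$). Then I would double count the incidences between copies of $K_m$ and their vertices: summing the preceding bound over all $n$ vertices counts each copy of $K_m$ exactly $m$ times, so
\[
k_m(G) \;\le\; \frac{1}{m}\sum_{v\in V(G)}\binom{s-1}{m-1} \;=\; \frac{n}{m}\binom{s-1}{m-1}.
\]

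The proof then concludes with the elementary identity $\frac{1}{m}\binom{s-1}{m-1}=\frac{1}{s}\binom{s}{m}$, which rewrites the right-hand side as exactly $\left(\frac{n}{s}\right)\binom{s}{m}$, the claimed bound. There is no real obstacle here: the argument is a one-line double count, and the only points worth noting are that the hypothesis $m \ge 2$ is used solely to ensure $m-1 \ge 1$ so that the star/clique counting is meaningful, and that the binomial identity makes the final step an equality, so none of the bound is wasted in the simplification.
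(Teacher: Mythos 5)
Your proof is correct and follows essentially the same double-counting argument as the paper: bound the degree by $s-1$, count copies of $K_m$ through each vertex by $(m-1)$-subsets of its neighborhood, divide by $m$, and apply the identity $\frac{1}{m}\binom{s-1}{m-1}=\frac{1}{s}\binom{s}{m}$.
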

\begin{proof}
Let $G$ be an $n$-vertex $K_{1,s}$-free graph.  Every vertex of $G$ has degree at most $s - 1$ so
\[
k_{m} (G) = \frac{1}{m} \sum_{v \in V(G) } k_{m-1}(\Gamma_G (v)) \leq \frac{n}{m} \binom{s-1}{m-1} = \frac{n}{s} \binom{s}{m}.
\]
\end{proof}  

We are now ready to prove Theorem~\ref{no k2t}.

\begin{proof}[Proof of Theorem~\ref{no k2t}]
Fix integers $3 \leq r \leq t$ and let $H$ be an $n$-vertex $r$-graph with no Berge-$K_{2,t}$.  
Let 
\begin{center}
$H_0 = H$, $F_0 = \partial H_0$, 
\end{center}
and $G_0$ be the graph with no edges and vertex set $V( H_0)$.  
If the graph $F_0$ is not $K_{2,t}$-free, then by Lemma \ref{bounded codegree},  
there is a pair of vertices $\{x_1 , y_1 \}$ with 
\[
d_{H_0} ( \{x_1 , y_1 \} ) < 2t.
\]
Now let $H_1$ be obtained from $H_0$ by removing all of the edges that contain $\{ x_1 , y_1 \}$ and 
\[
F_1 = \partial H_1.
\]
Let $G_1$ be the graph obtained by adding the edge $\{x_1 , y_1 \}$ to $G_0$.  

Now we iterate this process.  That is, for $i \geq 1$, we proceed as follows.  

If $F_{i-1}$ is not $K_{2,t}$-free, 
then by Lemma \ref{bounded codegree} there is a pair of vertices $\{x_i ,y_i \}$ in $H_{i - 1}$ with 
\[
d_{H_{i-1} } ( \{ x_i , y_i \} ) < 2t.
\]
Let $H_i$ be the $r$-graph obtained from $H_{i-1}$ by removing all of the edges that contain the pair $\{x_i , y_i \}$, let
\[
F_i = \partial H_i
\]
and $G_i$ be the graph obtained by adding the edge $\{x_i , y_i \}$ to $G_{i-1}$.  Observe that 
\[
e(H_i) > e(H_{i-1}) - 2t.  
\]
Suppose that this can be done for $l : = \delta e(H)$ steps where 
\[
\delta := \frac{1}{ \frac{r-1}{t} \binom{t}{r-1}  + 2t + 1 }.
\]
Consider the graph $G_l$.  This graph 
has $l$ edges and must be $K_{2,t}$-free otherwise, we find a $K_{2,t}$ in $H$ since edges in $G_i$ come from different edges in 
$H$.  Thus,
\[
\delta e(H) = e(G_l)  \leq \ex(n , K_{2,t} )
\]
so
\[
e(H) \leq \frac{1}{ \delta} \ex( n , K_{2,t} )
\]
and we are done.  

Now assume that this procedure terminates for some $l \in \{0,1, \dots , \delta e(H)  \}$
where $ l = 0$ is allowed.    
The graph $F_l$ must be $K_{2,t}$-free so 
\[
| \partial H_l | = e(F_l) \leq \ex(n , K_{2,t} ).
\]
Let 
\[
d_t = \frac{r-1}{t} \binom{t}{r-1} + 1.
\]
The values $d_t$ and $\delta$ satisfy the equation
\[
\frac{d_t}{1 - 2t \delta } = \frac{1}{ \delta}.
\]
If $e(H) \leq \frac{d_t}{1 - 2t \delta } \ex(n , K_{2,t} )$, then we are done.  
For contradiction, suppose that 
\begin{equation}\label{contra}
e(H) > \frac{d_t}{1 - 2t \delta } \ex(n , K_{2,t} ) .
\end{equation}

Let $H'$ be a $d_t$-full subgraph of $H_l$ with 
\begin{eqnarray*}
e(H') & \geq & e(H_l) - d_t | \partial H_l| \geq e(H_0) - 2t l  - d_t \ex(n , K_{2,t} )  \\
& \geq & e(H_0) - 2t  \delta e(H)  - d_t \ex(n , K_{2,t} )  \\
& = & (1 - 2t \delta ) e(H) - d_t \ex(n , K_{2,t} )  > 0 
\end{eqnarray*}
where the last inequality follows from (\ref{contra}).  

Let $F' = \partial H' $.  We now make 
a few observations about the graph $F'$.  First note that $F'$ contains edges since $e(H' ) > 0$.  
Second, $F'$ is $K_{2,t}$-free.  This is because $H'$ is a subgraph of $H_l$ and so $F'$ is a subgraph of $F_l$, but 
$F_l$ is $K_{2,t}$-free.    
Let $v$ be a vertex of $F'$ with positive degree.  The subgraph of $F'$ induced by the neighbors 
of $v$, which we denote by $\Gamma_{F'}(v)$, is $K_{1,t}$-free.  
Since $t \geq r -1$, we have by Lemma \ref{counting clique lemma} that
\begin{equation}\label{k2t eq1}
k_{r - 1} ( \Gamma_{F'} (v) )  \leq \left( \frac{ d_{F'} (v) }{ t} \right) \binom{t}{r - 1} .
\end{equation}
Now we find a lower bound for $k_{r - 1} ( \Gamma_{F' }(v) )$.
Let $w$ be a vertex in $\Gamma_{F'}(v)$.  
Since $H'$ is $d_t$-full, there are at least $d_t$~$r$-sets in $H'$ which contain 
$\{v , w \}$.  
Now if $e$ is an $r$-set in $H'$ that contains $\{v , w \}$, then the $(r-1)$-set $e \backslash \{v \}$ forms a 
$(r-1)$-clique in $\Gamma_{F'} (v)$.  Therefore, this holds for any of the 
$d_{F'}(v)$ vertices in $\Gamma_{F'} (v)$ and so  
\begin{equation}\label{k2t eq2}
k_{r-1} ( \Gamma_{F'} (v) ) \geq \frac{1}{r-1} d_{F'}(v) d_t .
\end{equation}
Combining (\ref{k2t eq1}) and (\ref{k2t eq2}) gives 
\[
\frac{1}{r-1} d_{F'} (v) d_t \leq k_{r-1} ( \Gamma_{F'} (v) ) \leq \left( \frac{ d_{F'} (v) }{ t} \right) \binom{t}{r - 1}.
\] 
As $d_{F'}(v) > 0$, the above inequality implies 
\[
d_t \leq \frac{r-1}{t} \binom{t}{r-1}
\]
which is a contradiction since $d_t = \frac{r-1}{t} \binom{t}{r-1} + 1$.  
We conclude that (\ref{contra}) cannot hold and this completes the proof.   
\end{proof}



We now prove a general upper bound that implies Theorem \ref{no kst}.  A similar result was proved in 
\cite{ge me vi}.  We have chosen to use notation similar to that of \cite{ge me vi} to highlight the correspondence.  

\begin{theorem}\label{new general ub}
Suppose $F$ is a bipartite graph and that there is a vertex $x \in V(F)$ such that for all $m \geq 1$, 
\[
\textup{ex}( m , K_{r-1} , F - x) \leq c m^i
\]
for some positive constant $c$ and integer $i \geq 1$.  If $r \geq 3$ is an integer, $v_F$ is the number 
of vertices of $F$, and $e_F$ is the number of edges of $F$, then for large enough $n$, depending on $r$ and $F$,   
\[
\textup{ex}_r ( n, \textrm{Berge-}F) \leq 4 c (r - 1) 2^{i-1} \frac{ \textup{ex}(n , F )^i }{ n^{i-1} } + 4(v_F  + e_F) n^2.
\]
\end{theorem}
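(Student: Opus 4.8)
The plan is to follow the strategy of Theorem~\ref{no k2t}, with the special structure of $K_{2,t}$ replaced by the hypothesis on $\ex(m,K_{r-1},F-x)$. Let $H$ be an $n$-vertex $r$-graph with no Berge-$F$, and set $H_0=H$. As long as the shadow $\partial H_i$ contains a copy of $F$, Lemma~\ref{bounded codegree} yields a pair of vertices of codegree less than $e_F$; I delete the (at most $e_F-1$) edges of $H_i$ through this pair and record the pair in an auxiliary graph $G$. Assigning to each recorded pair one of the distinct edges of $H$ that was deleted on its account shows, exactly as in Theorem~\ref{no k2t}, that $G$ remains $F$-free throughout, so the process stops after at most $\ex(n,F)$ steps and removes at most $e_F\,\ex(n,F)\le e_F n^2$ edges. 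At termination $\partial H_l$ is $F$-free. Applying the Full Subgraph Lemma with the constant parameter $d=e_F$ then yields a $d$-full subhypergraph $H'\subseteq H_l$ at the cost of at most $(d-1)|\partial H_l|\le e_F n^2$ further edges, and by the remark following Lemma~\ref{bounded codegree} the shadow $F':=\partial H'$ is again $F$-free. Thus $e(H)\le e(H')+4(v_F+e_F)n^2$, and it remains to bound $e(H')$.

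The key structural fact is that for every vertex $v$ the induced neighbourhood graph $\Gamma_{F'}(v)$ is $(F-x)$-free: a copy of $F-x$ inside $\Gamma_{F'}(v)$, together with $v$ playing the role of $x$, produces a copy of $F$ in $F'$, since $v$ is joined in $F'$ to every vertex of $\Gamma_{F'}(v)$ and in particular to the images of the neighbours of $x$. The hypothesis then gives $k_{r-1}(\Gamma_{F'}(v))\le \ex(d_{F'}(v),K_{r-1},F-x)\le c\,d_{F'}(v)^i$. Every edge of $H'$ containing $v$ restricts, upon deleting $v$, to a distinct copy of $K_{r-1}$ inside $\Gamma_{F'}(v)$, so the number of edges of $H'$ through $v$ is at most $k_{r-1}(\Gamma_{F'}(v))$. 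Summing over $v$ gives $r\,e(H')\le \sum_v k_{r-1}(\Gamma_{F'}(v))=r\,k_r(F')\le c\sum_v d_{F'}(v)^i$, so everything is reduced to estimating the weighted degree sum $\sum_v d_{F'}(v)^i$ for the $F$-free graph $F'$, which satisfies $\sum_v d_{F'}(v)=2e(F')\le 2\,\ex(n,F)$.

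To estimate this sum I would split the vertices at the average degree $\theta:=2\,\ex(n,F)/n$. Vertices with $d_{F'}(v)\le 2\theta$ contribute at most $(2\theta)^{i-1}\sum_v d_{F'}(v)\le (2\theta)^{i-1}\cdot 2\,\ex(n,F)$, which is of the order $\ex(n,F)^i/n^{i-1}$ with a constant of the shape $2^{i-1}$, and this furnishes the main term of the theorem. The main obstacle is the contribution of the high-degree vertices, those with $d_{F'}(v)>2\theta$: here the per-vertex bound $k_{r-1}(\Gamma_{F'}(v))\le c\,d_{F'}(v)^i$ is too weak to sum naively, since the handshake inequality only bounds the number of such vertices by $n/2$, and indeed $\sum_v d_{F'}(v)^i\le C\,\ex(n,F)^i/n^{i-1}$ is a reverse-Jensen inequality that genuinely fails for sufficiently irregular graphs. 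What should rescue the argument is that the quantity actually being bounded is $\sum_v k_{r-1}(\Gamma_{F'}(v))=r\,k_r(F')$, and a vertex of very large degree is forced to have a \emph{sparse} neighbourhood because $\Gamma_{F'}(v)$ is $(F-x)$-free; equivalently, too many high-degree vertices sharing many $(r-1)$-cliques in their neighbourhoods would create a copy of $F$ in $F'$. I would therefore group the high-degree vertices into dyadic classes $\{v:2^{j}\le d_{F'}(v)<2^{j+1}\}$ and, for each class, bound the total neighbourhood clique count by counting pairs (clique $Q$, common neighbour $v$) and re-using both the edge bound $e(F')\le\ex(n,F)$ and the $(F-x)$-freeness of neighbourhoods. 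Showing that these dyadic contributions decay geometrically, so that the sum is dominated by the scale $\theta$, is the crux of the proof; once this is established, combining with the deletion bookkeeping from the first step yields $e(H')\le 4c(r-1)2^{i-1}\,\ex(n,F)^i/n^{i-1}$ and hence the claimed bound.
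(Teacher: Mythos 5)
Your reduction to a $d$-full subhypergraph $H'$ whose shadow $F'$ is $F$-free is fine (the paper gets the same thing more directly by taking a $(v_F+e_F)$-full subgraph and invoking Lemma~\ref{bounded codegree}), and the observation that $\Gamma_{F'}(v)$ is $(F-x)$-free, hence $k_{r-1}(\Gamma_{F'}(v))\le c\,d_{F'}(v)^i$, is exactly the right use of the hypothesis. But from that point on there is a genuine gap, and it is the one you name yourself: after summing the per-vertex bounds you are left needing $\sum_v d_{F'}(v)^i \lesssim \ex(n,F)^i/n^{i-1}$, a reverse-Jensen inequality that fails for irregular $F$-free graphs (a single vertex of degree $\Theta(n)$ already contributes $n^i$, which can exceed the target when $i\ge 3$). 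Your proposed rescue --- dyadic classes of high-degree vertices with geometrically decaying contributions --- is not carried out, and note that what you would really be bounding is $\sum_v k_{r-1}(\Gamma_{F'}(v)) = r\,k_r(F')$, i.e.\ the Alon--Shikhelman quantity $\ex(n,K_r,F)$ itself; the theorem's hypothesis only controls $\ex(m,K_{r-1},F-x)$, so closing this loop is a genuinely separate problem, not a routine computation.

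The paper avoids the summation entirely by choosing the fullness parameter \emph{adaptively} rather than as the constant $e_F$. Writing $e(H)=4(v_F+e_F)n^{r-\theta}$, it passes to a $d$-full subgraph with the large value $d=(v_F+e_F)n^{r-\theta}/\ex(n,F)$, i.e.\ $d$ proportional to $e(H)/\ex(n,F)$; the Full Subgraph Lemma still leaves a positive fraction of the edges because $|\partial H_1|\le \ex(n,F)$. This buys the \emph{lower} bound $k_{r-1}(\Gamma_G(v))\ge d_G(v)\,d/(r-1)$ at every vertex, which your constant $d=e_F$ is too weak to exploit. Comparing this lower bound with the upper bound $c\,d_G(v)^i$ at a \emph{single} vertex $v$ of at most average degree $2\ex(n,F)/n$, cancelling the common factor $d_G(v)$, and unwinding the definition of $d$ gives the stated inequality for $e(H)$ directly. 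No control of the full degree-power sum is ever needed. If you want to complete a proof, replacing your constant fullness parameter with this edge-density-dependent one is the missing idea.
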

\begin{proof}
Let $F$ be a bipartite graph satisfying the assumptions of the theorem.  
Let $H$ be an $n$-vertex $r$-graph with no Berge-$F$.
If $e(H) \leq 4 ( v_F  + e_F) n^2$, then we are done.  
Assume otherwise and that $\theta$ satisfies
\[
e(H) = 4( v_F + e_F) n^{r - \theta}.
\]
Note that $r - \theta \geq 2$ since $e(H) > 4 (v_F + e_F)n^2$.  
Let $H_1$ be a $(v_F + e_F)$-full subgraph of $H$ with 
\begin{eqnarray*}
e(H_1) & \geq & e(H) - ( v_F + e_F) | \partial H| \geq 4(v_F + e_F) n^{r- \theta} - (v_F + e_F)n^2 \\
& \geq & 3 ( v_F + e_F) n^{r - \theta}.
\end{eqnarray*}
If $\partial H_1$ contains a copy of $F$, then since $H_1$ is $(v_F + e_F)$-full, we have a Berge-$F$ in $H_1$ (and thus $H$) by Lemma 
\ref{bounded codegree}; a contradiction
Thus, $\partial H_1$ is $F$-free and therefore $| \partial H_1 | \leq \textup{ex}( n , F)$.  
Let  
\[
d = \frac{ (  v_F + e_F  ) n^{r - \theta} }{ \textup{ex}(n , F) } .
\]
Let $H_2$ be a $d$-full subgraph of $H_1$ with 
\begin{eqnarray*}
e(H_2) & \geq &  e(H_1) - d | \partial H_1 | \geq 3 ( v_F + e_F ) n^{r - \theta} - d \cdot \textup{ex}( n , F)  \\
& = & 2 (v_F + e_F )  n^{r - \theta}.
\end{eqnarray*}
Let $H_3$ be the subgraph of $H_2$ obtained by removing all isolated vertices and let $G = \partial H_3$.

The graph $G$ is $F$-free as it is a subgraph of $\partial H_1$, so $e(G) \leq \textup{ex}(n , F)$.  Let $v$ be a vertex of $G$ with 
\begin{equation}\label{degree of v}
d_G(v) \leq \frac{2 \textup{ex}(n , F) }{ n}.
\end{equation}
Let $\Gamma_G (v)$ be the subgraph of $G$ induced by the neighbors of $v$ in $G$.  
As $H_3$ is $d$-full, we have that 
there are at least $d$ edges in $H_3$ that contain both $v$ and $w$ for any vertex $w \in \Gamma_G (v)$.  Each such 
edge in $H_3$ gives rise to a $K_{r-1}$ in $\Gamma_G (v)$ that contains $w$.  Therefore, 
\[
k_{r - 1} ( \Gamma_G (v) ) \geq \frac{ d_G (v) d }{r -1}.
\]
However, $G$ is $F$-free and so $\Gamma_G (v)$ is $(F-x)$-free where $x$ is any vertex in $F$.  We conclude that 
\[
\frac{ d_G (v) d }{r - 1} \leq k_{r - 1} ( \Gamma_G (v) ) \leq \textup{ex}( d_G(v) , K_{r-1} , F-x )
\]
for any $x \in V(F)$.  Using our hypothesis and the definition of $d$, this inequality can be rewritten as 
\[
\frac{ d_G(v) ( v_F + e_F ) n^{r - \theta} }{( r-1) \textup{ex}(n,F) }  
\leq 
c d_G(v)^i.
\]
We can cancel a factor of $d_G (v)$ and rearrange the above inequality to get, using (\ref{degree of v}), that
\[
(v_F + e_F) n^{r - \theta} \leq c (r - 1) \textup{ex}(n , F) \left( \frac{ 2 \textup{ex}(n , F) }{n} \right)^{ i - 1}.
\]
Since $e(H) = 4 ( v_F + e_F ) n^{r - \theta}$,  
\[
e(H) \leq 4 c (r - 1) 2^{i-1} \frac{ \textup{ex}(n , F) ^i }{ n^{i-1} }.
\]
\end{proof}

We complete this section by using Theorem \ref{new general ub} to prove Theorem \ref{no kst}.  We must 
show that 
\[
\ex_r ( n , \textup{Berge-}K_{s,t} ) = O ( n^{ r- \frac{ r ( r- 1) }{2s} } )
\]
for $3 \leq r \leq s \leq t$.    

\begin{proof}[Proof of Theorem \ref{no kst}]
Let $3 \leq r \leq s \leq t$ be integers.  
By a result of Alon and Shikhelman (see Lemma 4.2 \cite{as}), 
\[
\textup{ex} ( m , K_{r-1} , K_{s-1 , t} ) \leq \left( \frac{1}{(r-1)! } - o_m(1) \right) ( t - 1)^{ \frac{ (r-1)(r-2) }{2(s-1)} } 
m^{ r -1 - \frac{ (r-1)(r-2) }{2(s-1) } }.
\]
We apply Theorem \ref{new general ub} with $c$ sufficiently large as a function of $r$, $s$, and $t$, with 
\[
i =  r -1 - \frac{ (r-1)(r-2) }{2(s-1) },
\]
and use the well-known bound $\textup{ex}(n , K_{s,t}) = O(n^{2 - 1/s} )$ to get 
that for large enough $n$, 
\[
\textup{ex}_r (  n , \textrm{Berge-}K_{s,t} ) = O ( n^{ (2 - 1/s) i - i + 1} ).
\]
Here the implied constant depends only on $r$, $s$, and $t$.  
A short calculation shows that 
\[
(2 - 1/s) i - i + 1 = r - \frac{ r (r - 1) }{2s}
\]
and this completes the proof. 
\end{proof}

\subsection{Lower Bounds and the proof of Theorem \ref{4 lower bound}}\label{lower bounds section}

By Proposition \ref{simple ineq}, 
\[
\ex(n , K_r , F) \leq \ex_r ( n , \textup{Berge-}F ) \leq \ex_r ( n , F^+ ).
\]
We can use this inequality together with the results of \cite{as} to immediately obtain lower bounds on 
$\ex_r ( n , \textup{Berge-}F)$ and $\ex_r ( n , F^+)$.  

\begin{theorem}[Alon, Shikhelman \cite{as}]
For $r \geq 2$, $s \geq 2r - 2$, and $t \geq (s - 1)! + 1$, 
\[
\left( \frac{1}{r!} + o(1) \right) n^{ r - \frac{ r (r - 1) }{2s} } 
\leq \ex(n , K_r , K_{s,t}) . 
\]
For $s \geq 2$ and $t \geq ( s- 1)! + 1$, 
\[
\left( \frac{1}{6} + o(1) \right) n^{ 3 - \frac{ 3 }{s} } 
\leq \ex(n , K_3 , K_{s,t}) . 
\]
\end{theorem}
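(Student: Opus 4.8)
The plan is to prove both lower bounds by exhibiting an explicit, highly pseudorandom $K_{s,t}$-free graph and counting its copies of $K_r$ directly. The natural candidate is the (projective) norm graph of Koll\'{a}r--R\'{o}nyai--Szab\'{o} and Alon--R\'{o}nyai--Szab\'{o}: for $t=(s-1)!+1$ it is $K_{s,t}$-free, it has $n\sim q^s$ vertices, it is $d$-regular with $d\sim q^{s-1}=\Theta(n^{1-1/s})$, and, being a Cayley graph whose nontrivial eigenvalues are Weil-bounded character sums, it is an $(n,d,\lambda)$-graph with $\lambda=O(\sqrt{d})$. Since $\binom{n}{r}(d/n)^{\binom{r}{2}}\sim \frac{1}{r!}\,n^{\,r-r(r-1)/(2s)}$, the whole problem reduces to showing that this graph contains $(1+o(1))\binom{n}{r}(d/n)^{\binom{r}{2}}$ copies of $K_r$, that is, as many as one expects in a random graph of the same edge density $p=d/n\sim n^{-1/s}$.

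First I would carry out the clique count by the pseudorandomness (eigenvalue) method, building a $K_r$ one vertex at a time and repeatedly using that in an $(n,d,\lambda)$-graph the common neighbourhood of a fixed set behaves like the random prediction up to an error governed by $\lambda$. The count comes out as $(1+o(1))\binom{n}{r}(d/n)^{\binom{r}{2}}$ precisely when the accumulated error stays below the main term, and a short computation shows that for $d=\Theta(n^{1-1/s})$ and $\lambda=O(\sqrt d)$ this requirement is equivalent to $s\ge 2r-2$ --- exactly the hypothesis in the first part of the statement. Substituting $p=d/n$ then yields $\ex(n,K_r,K_{s,t})\ge\left(\frac{1}{r!}+o(1)\right)n^{\,r-r(r-1)/(2s)}$ for all $r\ge 2$ and $s\ge 2r-2$.

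For the second part ($r=3$, all $s\ge 2$) the eigenvalue estimate alone is too weak: when $s<4$ the generic error term is already of the same order as the main term, so I would instead count triangles essentially exactly. Writing the number of triangles as $\frac{1}{6}\sum_i\lambda_i^3$ isolates the main term $\frac{1}{6}d^3\sim\frac{1}{6}n^{3-3/s}$, and the remaining cases $s\in\{2,3\}$ are settled by a direct algebraic evaluation in the norm graph; for $s=2$ one may instead invoke the Erd\H{o}s--R\'{e}nyi polarity graph, which is $C_4$-free (i.e.\ $K_{2,2}$-free) and, since in a $C_4$-free graph every edge lies in at most one triangle while here essentially every edge lies in exactly one, has $\left(\frac{1}{6}+o(1)\right)n^{3/2}$ triangles. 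In every case the leading coefficient is $\frac{1}{6}=\frac{1}{3!}$, matching the general formula.

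The hard part will be obtaining the sharp leading constant $\frac{1}{r!}$ rather than merely the correct order of magnitude. This forces the clique-counting error to be genuinely $o(\cdot)$ of the main term, which is exactly where the near-optimal pseudorandomness $\lambda=O(\sqrt d)$ and the numerical hypothesis $s\ge 2r-2$ are used; and it is why the small-$s$ triangle cases cannot be handled by the generic expander-mixing estimate and instead demand the sharper exact count in the algebraic construction.
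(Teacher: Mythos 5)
First, note that the paper contains no proof of this statement: it is imported verbatim from Alon and Shikhelman \cite{as} and used only as a black box, via Proposition \ref{simple ineq}, to get lower bounds on $\ex_3(n,\textup{Berge-}K_{s,t})$. So the comparison can only be with the proof in \cite{as}, and your proposal follows essentially that route: projective norm graphs plus spectral clique counting for the first display, and separate treatment of triangles for small $s$ in the second. Your arithmetic for the first display is correct: with $n\sim q^{s}$, $d\sim q^{s-1}$ and $\lambda=O(\sqrt{d})$, the condition under which an $(n,d,\lambda)$-graph has $(1+o(1))\binom{n}{r}(d/n)^{\binom{r}{2}}$ copies of $K_r$ (Theorem 4.10 of \cite{ks}) is $\lambda\ll d^{\,r-1}/n^{\,r-2}$, which unwinds to $(2r-3)/(2s)<1/2$, i.e.\ $s\geq 2r-2$. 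Two ingredients you treat as free should be flagged: the spectral bound $\lambda=O(q^{(s-1)/2})$ is a theorem of Szab\'o \cite{szabo}, not an automatic ``Cayley graph, hence Weil'' remark (the norm graph is not a Cayley graph of an abelian group in the usual sense, and the relevant character sums require an argument); and the construction lives only on $q^{s-1}(q-1)$ vertices, so passing to arbitrary $n$ needs the standard density-of-primes plus vertex-deletion step (harmless here, since deleting $o(n)$ vertices destroys only an $o(1)$ fraction of the cliques).

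The real gap is in the second display. Your $s=2$ argument via the polarity graph is fine and easy to finish: for all but $O(q^2)$ of the $\sim q^3/2$ edges $\{u,v\}$ the unique point $u^{\perp}\cap v^{\perp}$ completes a triangle, so the graph has $(\tfrac16+o(1))n^{3/2}$ triangles. But $s=3$ is the one case covered by neither the spectral count (which, as you observe, needs $s\geq 4$ when $r=3$) nor the polarity graph, and ``direct algebraic evaluation'' is where the entire proof of that case lives. Concretely: in the norm graph a triangle over base points $x,y,z\in\mathbb{F}_{q^{s-1}}$ exists iff $N(x+y)N(x+z)/N(y+z)$ is a nonzero square in $\mathbb{F}_q$, in which case there are exactly two such triangles; so one must show this condition holds for a $(\tfrac12+o(1))$ proportion of all triples, which amounts to proving $\sum_{x,y,z}\psi\bigl((x+y)(x+z)(y+z)\bigr)=o(q^{3(s-1)})$ for the quadratic character $\psi$ of $\mathbb{F}_{q^{s-1}}$ (a classical complete character-sum evaluation suffices). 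Until some such computation is written down, the $s=3$ case of the second inequality --- and hence the theorem as stated --- is not established by your outline.
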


Kostochka, Mubayi, and Verstra\"{e}te \cite{kmv III} proved that for any $3 \leq s \leq t$, 
\[
\ex_3 ( n , K_{s,t}^+ )  = O (n^{3 - 3/s} ).
\]
It follows from Proposition \ref{simple ineq} that all three of the functions 
\[
\ex(n , K_3 , K_{s,t}), ~ \ex_3 ( n , \textup{Berge-}K_{s,t} ) , ~ \mbox{and}~\ex_3 ( n , K_{s,t}^+ ) 
\]
are $O(n^{3 - 3/s} )$, and in the case that $t \geq (s - 1)!+1$, they are $\Theta ( n^{3 - 3/s} )$.  



Before giving our lower bounds we introduce some notation.  Let $G$ be a graph and $A$ and $B$ be disjoint subsets of 
$V(G)$.  Write $G[A]$ for the subgraph of $G$ induced by $A$ and $G(A,B)$ for the spanning subgraph of 
$G$ whose edges are those with one endpoint in $A$ and the other in $B$.

\begin{lemma}\label{lemma 1}
Let $3 \leq s \leq t$ be integers.  
Let $G$ be a graph and $V(G) = A \cup B$ be a partition of the vertex set of $G$.  If $G[A]$ is $K_{2,2}$-free, 
$G[B]$ is $K_{2,2}$-free, and $G(A,B)$ is $K_{s,t}$-free, then $G$ is $K_{s+1,t+1}$-free.
\end{lemma}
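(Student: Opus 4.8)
The plan is to prove the contrapositive is unnecessary; instead I would argue directly by supposing $G$ contains a $K_{s+1,t+1}$ and deriving a contradiction with one of the three hypotheses. So suppose $G$ has a copy of $K_{s+1,t+1}$ with parts $X$ and $Y$, where $|X| = s+1$ and $|Y| = t+1$, and every vertex of $X$ is adjacent to every vertex of $Y$. Each of $X$ and $Y$ gets split between $A$ and $B$ according to the partition $V(G) = A \cup B$. Write $X_A = X \cap A$, $X_B = X \cap B$, $Y_A = Y \cap A$, $Y_B = Y \cap B$. The key point is that the complete bipartite structure between $X$ and $Y$ induces complete bipartite structure between each of these pieces, and I want to locate a forbidden configuration in one of the three subgraphs $G[A]$, $G[B]$, $G(A,B)$.

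First I would observe how the four pieces interact. The pairs $(X_A, Y_A)$ give a complete bipartite graph inside $G[A]$; the pairs $(X_B, Y_B)$ give one inside $G[B]$; and the cross pairs $(X_A, Y_B)$ and $(X_B, Y_A)$ live in $G(A,B)$. Since $G[A]$ and $G[B]$ are $K_{2,2}$-free, neither can contain a $K_{2,2}$, so for instance if $|X_A| \geq 2$ and $|Y_A| \geq 2$ we already have a $K_{2,2}$ in $G[A]$, contradiction; similarly for $B$. This forces, in each of $A$ and $B$, that at least one of the two relevant pieces has size at most $1$. The plan is to use these constraints together with $|X| = s+1$ and $|Y| = t+1$ to push enough vertices into the cross subgraph $G(A,B)$ to produce a $K_{s,t}$ there.

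The main work is the counting. From $K_{2,2}$-freeness of $G[A]$ we get $\min(|X_A|, |Y_A|) \leq 1$, and from $G[B]$ we get $\min(|X_B|, |Y_B|) \leq 1$. I would consider the cases according to which side is small. For example, if $|X_A| \leq 1$ and $|X_B| \leq 1$ then $|X| \leq 2$, which contradicts $|X| = s+1 \geq 4$; so the two small sides cannot both be on the $X$-side, and by symmetry not both on the $Y$-side. Hence one of the small sides is an $X$-piece and the other a $Y$-piece — say $|X_A| \leq 1$ and $|Y_B| \leq 1$ (the other case $|Y_A| \leq 1$, $|X_B| \leq 1$ is symmetric). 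Then $|X_B| = |X| - |X_A| \geq (s+1) - 1 = s$ and $|Y_A| = |Y| - |Y_B| \geq (t+1) - 1 = t$. The cross subgraph $G(A,B)$ contains all edges between $X_B \subseteq B$ and $Y_A \subseteq A$, and these form a complete bipartite graph with parts of size at least $s$ and at least $t$, i.e.\ a $K_{s,t}$. This contradicts the hypothesis that $G(A,B)$ is $K_{s,t}$-free, completing the argument.

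The step I expect to require the most care is the case analysis ensuring that the two ``small'' sides land on opposite sides ($X$ and $Y$) and on opposite parts ($A$ and $B$) — one has to check that every distribution of the small constraints either immediately violates $|X| \geq 4$ or $|Y| \geq 4$, or else delivers one large $X$-piece and one large $Y$-piece sitting inside $G(A,B)$. The hypothesis $s \geq 3$ (so $s+1 \geq 4$ and $t+1 \geq 4$) is exactly what is needed to rule out the degenerate distributions, and I would make sure to record where it is used. Everything else is bookkeeping about which complete bipartite subgraph lives in which of the three induced pieces.
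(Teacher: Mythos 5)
Your proof is correct and is essentially the same argument as the paper's: both exploit that $K_{2,2}$-freeness of $G[A]$ and $G[B]$ forces $\min(|X_A|,|Y_A|)\leq 1$ and $\min(|X_B|,|Y_B|)\leq 1$, rule out the degenerate distributions using $s+1, t+1 \geq 4$, and then locate a $K_{s,t}$ between the large $X$-piece and the large $Y$-piece inside $G(A,B)$. The paper organizes the cases by whether $A$ or $B$ contains at least $s$ of the $x_i$'s, but the content is identical, and your four-way case check is exhaustive.
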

\begin{proof}
For contradiction, suppose that 
\begin{center}
$\{ x_1  , \dots , x_{s+1} \}$ and $\{ y_1 , \dots , y_{t+1} \}$
\end{center}
are parts of a $K_{s+1 , t+1}$ in $G$.  
Assume first that $A$ contains at least $s$ of the $x_i$'s.  Since $s > 2$ and $G[A]$ is $K_{2,2}$-free, 
$A$ can contain at most one $y_j$ so that $B$ contains at least $t$ of the $y_j$'s.  This, however, gives a $K_{s,t}$ in 
$G(A,B)$ which is a contradiction.  By symmetry, $B$ cannot contain $s$ of the $x_i$'s and so 
we may assume that $A$ contains at least two $x_i$'s and $B$ contains at least two $x_i$'s.  Here we are using the 
fact that $s + 1 \geq 4$.  As $G[A]$ and $G[B]$ are $K_{2,2}$-free, each of $A$ and $B$ can contain at most 
one $y_j$ which is a contradiction since $t + 1 > 2$.    
\end{proof}


Our construction will make use of the Projective Norm Graphs of 
Alon, Koll\'{a}r, R\'{o}nyai, and Szab\'{o} \cite{ars, krs}.
Let $q$ be a power of an odd prime, $s \geq 2$ be an integer, and 
$N: \mathbb{F}_{q^{s-1} } \rightarrow \mathbb{F}_q$ be the norm function defined by 
\[
N(X) = X^{1 + q + q^2 + \dots + q^{s - 2} }.
\]
The Projective Norm Graph, which we denote by $H(s,q)$, is the graph 
with vertex set $\mathbb{F}_{q^{s  - 1} } \times \mathbb{F}_q^*$ where $(x_1 , x_2)$ is 
adjacent to $(y_1 , y_2)$ if $N(x_1 + y_1 ) = x_2 y_2$.  
We will use a bipartite version of this graph.  Let $H^b (s,q)$ be the bipartite graph 
whose parts are $A$ and $B$ where $A$ and $B$ are disjoint copies of 
$\mathbb{F}_{q^{s  - 1} } \times \mathbb{F}_q^*$, and 
$(x_1 , x_2)_A$ in $A$ is adjacent to $(y_1 , y_2)_B$ in $B$ if 
\[
N(x_1 + y_1) = x_2 y_2.
\]
It is shown in \cite{ars} that $H(s,q)$ is $K_{s,(s-1)!+1}$-free. A similar argument gives that  $H^b(s,q)$ is $K_{s,(s-1)!+1}$-free.

\begin{lemma}\label{lemma 2}
Let $s \geq 3$ be a fixed integer.  The graph $H^b(s,q)$ has at least 
\[
(1 - o(1)) \dfrac{ q^{4 ( s - 1) } }{4}
\]
copies of $K_{2,2}$ where $o(1) \rightarrow 0$ as $q \rightarrow \infty$.  
\end{lemma}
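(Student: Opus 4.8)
The plan is to count copies of $K_{2,2}$ by summing, over unordered pairs $\{a,a'\}$ of vertices in $A$, the quantity $\binom{c(a,a')}{2}$, where $c(a,a')$ is the number of common neighbors of $a$ and $a'$ in $B$. Since $H^b(s,q)$ is bipartite with parts $A$ and $B$, every copy of $K_{2,2}$ has exactly two vertices in each part and hence is counted exactly once this way, via its unique pair of $A$-vertices. So the whole problem reduces to understanding the codegree function $c(a,a')$, and in fact I expect to compute it \emph{exactly}, which will even give an asymptotic equality rather than just the claimed lower bound.

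First I would compute $c(a,a')$. Write $a=(x_1,x_2)$ and $a'=(x_1',x_2')$. A vertex $(y_1,y_2)\in B$ is a common neighbor precisely when $N(x_1+y_1)=x_2 y_2$ and $N(x_1'+y_1)=x_2' y_2$. If $x_1=x_1'$, the two left-hand sides coincide while $x_2\neq x_2'$ and $y_2\neq 0$, so there is no common neighbor and $c(a,a')=0$. If $x_1\neq x_1'$, then dividing the two equations eliminates $y_2$, and multiplicativity of $N$ turns the condition into
\[
N\!\left(\frac{x_1'+y_1}{x_1+y_1}\right)=\frac{x_2'}{x_2},
\]
valid for $y_1\notin\{-x_1,-x_1'\}$ (so that both norms are nonzero, which is forced since $y_2\in\mathbb{F}_q^*$). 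The substitution $t=\frac{x_1'+y_1}{x_1+y_1}$ is a bijection from $\{y_1:y_1\neq -x_1,-x_1'\}$ onto $\mathbb{F}_{q^{s-1}}^*\setminus\{1\}$, so $c(a,a')$ equals the number of $t\in\mathbb{F}_{q^{s-1}}^*\setminus\{1\}$ with $N(t)=x_2'/x_2$.

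The crux of the argument — the step I expect to do the real work — is that $N\colon\mathbb{F}_{q^{s-1}}^*\to\mathbb{F}_q^*$ is a surjective group homomorphism, so all of its fibers have the same size $K:=\frac{q^{s-1}-1}{q-1}$. Hence the equation $N(t)=x_2'/x_2$ has exactly $K$ solutions in $\mathbb{F}_{q^{s-1}}^*$, and we discard $t=1$ exactly when it is a solution, i.e.\ when $x_2=x_2'$. Therefore, for $x_1\neq x_1'$ we get $c(a,a')=K$ if $x_2\neq x_2'$ and $c(a,a')=K-1$ if $x_2=x_2'$.

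It then remains to tally. There are $\tfrac{1}{2}q^{s-1}(q^{s-1}-1)(q-1)(q-2)$ unordered pairs with $x_1\neq x_1'$ and $x_2\neq x_2'$, and $\tfrac{1}{2}q^{s-1}(q^{s-1}-1)(q-1)$ with $x_1\neq x_1'$ and $x_2=x_2'$; pairs with $x_1=x_1'$ contribute nothing. Thus the number of copies of $K_{2,2}$ is
\[
\tfrac{1}{2}q^{s-1}(q^{s-1}-1)(q-1)(q-2)\binom{K}{2}+\tfrac{1}{2}q^{s-1}(q^{s-1}-1)(q-1)\binom{K-1}{2}.
\]
Since $K=(1-o(1))q^{s-2}$, the first summand is $(1-o(1))\tfrac{1}{4}q^{4(s-1)}$ while the second is $O(q^{4s-5})$, which yields the claimed bound. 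I do not anticipate any difficulty in this final bookkeeping; the only genuine content is the codegree computation together with the fiber-size fact for the norm map.
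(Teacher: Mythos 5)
Your proof is correct, but it takes a genuinely different route from the paper. The paper's argument is purely combinatorial: it proves a general lower bound on the number of $K_{2,2}$'s in any $d$-regular bipartite graph with $n$ vertices per part, via convexity applied to $\sum_{\{x,x'\}}\binom{\hat d(\{x,x'\})}{2}$ together with the identity $\sum_{\{x,x'\}}\hat d(\{x,x'\})=n\binom{d}{2}$, and then only plugs in the parameters of $H^b(s,q)$ (namely $n=q^{s-1}(q-1)$ and $d=q^{s-1}-1$). You instead exploit the algebraic structure: you reduce the codegree of a pair in $A$ to counting solutions of $N(t)=x_2'/x_2$ with $t\neq 1$ via a M\"obius substitution, and use that the norm is a surjective homomorphism $\mathbb{F}_{q^{s-1}}^*\to\mathbb{F}_q^*$ with all fibers of size $K=\frac{q^{s-1}-1}{q-1}$. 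Your computation checks out (including the degenerate cases $x_1=x_1'$ and $x_2=x_2'$, and the fact that $y_2$ is then uniquely determined and nonzero), and it buys strictly more: an exact count of $K_{2,2}$'s and hence an asymptotic equality, whereas the paper only gets a lower bound. What the paper's approach buys in exchange is brevity and robustness --- it needs nothing about $H^b(s,q)$ beyond regularity, so the same lemma applies verbatim to any regular bipartite host graph.
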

\begin{proof}
We will use a known counting argument to obtain a lower bound on the number 
of $K_{2,2}$'s in a $d$-regular bipartite graph with $n$ vertices in each part.  

Suppose that $F$ is a $d$-regular bipartite graph with parts $X$ and $Y$ where $|X| = |Y| = n$.  
Write $X^{(2)}$ for the set of all subsets of size 2 in $X$ and 
write $\hat{d}( \{x , x'  \})$ for the number of vertices that are adjacent to both $x$ and $x'$.  
We have 
\begin{equation}\label{lemma 2 eq1}
\sum_{ \{x , x' \} \in X^{ (2) } }  \hat{d}( \{x , x' \} )  = 
\sum_{y \in Y } \binom{ d(y) }{2} = n \binom{d}{2}.
\end{equation}
The number of $K_{2,2}$'s in $F$ is 
\[
\sum_{ \{ x , x' \} \in X^{ (2) } } \binom{ \hat{d}( \{x , x'  \} ) }{2}  \geq
\binom{n}{2} 
\binom{     \binom{n}{2}^{-1} \sum_{ \{ x , x' \} \in X^{ (2) } } \hat{d}( \{x , x'  \} )  }{2}
\geq \binom{n}{2} 
\binom{   n \binom{d}{2}  / \binom{n}{2} }{2}
\]
where the first inequality is by convexity and the second is by (\ref{lemma 2 eq1}).  
Therefore, the number of $K_{2,2}$'s in $F$ is at least 
\[
\frac{1}{2} n \binom{d}{2} \left( \frac{ n \binom{d}{2} }{ \binom{n}{2} } - 1 \right) 
= 
\frac{nd(d - 1) }{4} \left( \frac{ d ( d - 1) }{ n - 1} - 1 \right).
\]

The graph $H^b(s,q)$ has $q^{s-1} (q - 1)$ vertices in each part and is $(q^{s-1} - 1)$-regular.  
For $s \geq 3$, we have that the number of $K_{2,2}$'s in $H^b (s,q)$ is at least 
\[
(1 - o(1)) \frac{ q^{4s - 4 } }{4}
\]
where $o(1) \rightarrow 0$ as $q \rightarrow \infty$.  
\end{proof} 


Let $q$ be a power of an odd prime and $R_q$ be the graph with vertex 
set $\mathbb{F}_q \times \mathbb{F}_q$ where $(a_1 , a_2)$ is adjacent to $(b_1 , b_2)$ if and only if 
$a_1 + b_1 = a_2 b_2$.  The graph $R_q$ has 
$q^2$ vertices. It is easy to check (see \cite{lw}) that $R_q$ has $\frac{1}{2} q^2 ( q - 1)$ edges and no copy of $K_{2,2}$.  

We now have all of the tools that we need in order to prove Theorem \ref{4 lower bound}.
We must show that for $s \geq 3$ and $q$ an even power of an odd prime, 
\[
\ex( 2q^s , K_4 , K_{s + 1  , (s - 1)! + 2} ) \geq \left(\frac{1}{4} - o(1) \right)  q^{3s - 4} .
\]

\begin{proof}[Proof of Theorem~\ref{4 lower bound}]
Let $A$ and $B$ be disjoint sets of $q^s$ vertices each.  Choose $A' \subset A$ and $B' \subset B$ arbitrarily
with $|A'| = |B'| = q^{s - 1} ( q - 1)$.  Put a copy of $H^b (s,q) $ between $A'$ and $B'$.
Finally, pick two independent random copies of $R_{q^{s/2}}$ on vertex sets $A$ and $B$ and
 let $G$ be the resulting graph.  
Observe that a given pair in $A$ (or $B$) is adjacent with probability $q^{ - s/2}$.  
By Lemma 
\ref{lemma 2} and independence, the expected number of 
copies of $K_4$ in $G$ is at least 
\[
\left(\frac{1}{4} - o(1) \right)  q^{ 4 ( s- 1) }  \left( \frac{1}{ q^{s/2} } \right)^2 = 
\left(\frac{1}{4}  - o(1) \right)  q^{3s - 4} .
\]
Fix a graph $G_q$ with at least this many copies of $K_{4}$.  
Clearly $G_q [A]$ and $G_q [B]$ are both $K_{2,2}$-free and the edges of $G_q (A,B)$ form a $H^b(s,q)$ which is $K_{s , (s - 1)! + 1}$-free.  
By Lemma 
\ref{lemma 1}, $G_q$ is $K_{s + 1 , (s - 1)! + 2}$-free.  
\end{proof}


A density of primes argument, Theorem \ref{4 lower bound}, and Theorem \ref{no kst} give the following result
for 4-graphs.  

\begin{corollary}
If $s \geq 3$ is an integer, then for sufficiently large $n$, there are positive constants $c_s$ and $C_s$ such that 
\[
c_s n^{3 - 4/s} \leq \ex_4 ( n , \textup{Berge-}K_{ s + 1 , (s - 1)! + 2 } ) \leq C_s n^{4 - 6 / (s + 1) }.
\]
\end{corollary}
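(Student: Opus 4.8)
The plan is to prove the two inequalities separately, deducing each from a result already established earlier in the paper; the only genuine work lies in the lower bound.

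For the upper bound I would invoke Theorem~\ref{no kst} applied with $r = 4$ and with the roles of its $s$ and $t$ played by $s+1$ and $(s-1)! + 2$. Its hypothesis $3 \le r \le s \le t$ then requires $4 \le s+1 \le (s-1)! + 2$: the first inequality is exactly the assumption $s \ge 3$, and the second reduces to $s-1 \le (s-1)!$, which holds for every $s \ge 2$. With these parameters the exponent $r - \frac{r(r-1)}{2s}$ becomes $4 - \frac{12}{2(s+1)} = 4 - \frac{6}{s+1}$, so Theorem~\ref{no kst} yields $\ex_4(n, \textup{Berge-}K_{s+1,(s-1)!+2}) = O(n^{4 - 6/(s+1)})$, i.e.\ the claimed bound for an appropriate constant $C_s$.

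For the lower bound I would first combine Proposition~\ref{simple ineq} (with $r=4$) and Theorem~\ref{4 lower bound}. Proposition~\ref{simple ineq} gives $\ex(N, K_4, K_{s+1,(s-1)!+2}) \le \ex_4(N, \textup{Berge-}K_{s+1,(s-1)!+2})$, so for $N = 2q^s$ with $q$ an even power of an odd prime Theorem~\ref{4 lower bound} directly gives
\[
\ex_4\!\left(2q^s, \textup{Berge-}K_{s+1,(s-1)!+2}\right) \ge \left(\tfrac14 - o(1)\right) q^{3s - 4}.
\]
Writing $q = (N/2)^{1/s}$ turns $q^{3s-4}$ into $(N/2)^{3 - 4/s}$, so along the sequence $N = 2q^s$ the bound reads $\ge c'_s\, N^{3 - 4/s}$ for some constant $c'_s$.

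The step where I would be most careful is extending this from the special values $N = 2q^s$ to all sufficiently large $n$, which is the density-of-primes argument referenced in the statement; it is routine but is the only non-immediate point. I would restrict to $q = p^2$ for $p$ an odd prime (these are even powers of odd primes) and use Bertrand's postulate: consecutive primes satisfy $p' < 2p$, so consecutive admissible $q = p^2$ differ by less than a factor of $4$, and hence consecutive admissible $N = 2q^s$ differ by less than a factor of $4^s$. Given large $n$, I would take the largest admissible $\tilde N = 2q^s \le n$, so that $\tilde N > n/4^s$. Since $\ex_4(\cdot, \textup{Berge-}F)$ is monotone in the number of vertices (pad any construction with isolated vertices), the construction on $\tilde N$ vertices gives
\[
\ex_4\!\left(n, \textup{Berge-}K_{s+1,(s-1)!+2}\right) \ge c'_s\, \tilde N^{3 - 4/s} \ge c'_s\, (n/4^s)^{3-4/s} = c_s\, n^{3-4/s}
\]
for a suitable constant $c_s$, completing the proof. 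I expect no serious obstacle beyond verifying the parameter hypotheses of Theorem~\ref{no kst} and tracking the constant lost in this interpolation.
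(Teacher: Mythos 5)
Your proof is correct and follows exactly the route the paper intends: the upper bound from Theorem~\ref{no kst} with $r=4$ and $(s,t)\mapsto(s+1,(s-1)!+2)$, and the lower bound from Theorem~\ref{4 lower bound} via Proposition~\ref{simple ineq} together with a density-of-primes (Bertrand) interpolation. The paper states this corollary with only the one-line remark ``A density of primes argument, Theorem~\ref{4 lower bound}, and Theorem~\ref{no kst} give the following result,'' so your write-up simply supplies the details it omits, and the parameter checks and the constant $4^{-s(3-4/s)}$ lost in the interpolation are all handled correctly.
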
 

In particular, there is a positive constant $c$ such that  
\begin{equation}\label{all 4 case}
c n^{5/3} \leq \ex  ( n , K_4 , K_{4,4} )
\end{equation}
provided $n$ is sufficiently large.  This lower bound is better than what one 
obtains using a simple expected value argument and random graphs. 
Indeed, suppose $G$ is a random 
$n$-vertex graph where a pair forms an edge with probability $p$, independently of the other edges. 
Let $X$ be the number of 4-cliques in $G$ and $Y$ be the number of $K_{4,4}$'s in $G$.  We have 
\[
\mathbb{E} (X - Y ) \geq \left( \frac{n}{4} \right)^4 p^6 - n^8 p^{16}.
\]
If $p = \left( \frac{3}{2^{11}} \right)^{1/10} n^{ - 2/5}$, then 
\[
\mathbb{E}(X-Y) \geq 0.00004 n^{8/5}.
\]
This implies that there is an $n$-vertex graph for which we can remove one edge from each $K_{4,4}$ and 
have a subgraph that is $K_{4,4}$-free and has at least $0.00004 n^{8/5}$ copies of $K_4$.  While simple, this 
argument does not improve (\ref{all 4 case}).


\section{Counting $r$-graphs of girth $5$ and the proof of Theorem \ref{counting theorem girth 5 intro}}\label{counting section}

For a family of forbidden subgraphs $\mathcal{F}$, denote by $F_r(n, \mathcal{F})$ the family of all $r$-uniform simple hypergraphs on $n$ vertices which do not contain any member of $\mathcal{F}$ as a subgraph and let $F_r(n, \mathcal{F}, m)$ denote those graphs in $F_r(n, \mathcal{F})$ which have $m$ edges. Let 
\begin{align*}
f_r(n, \mathcal{F}) &= |F_r(n, \mathcal{F})|
\\ f_r(n, \mathcal{F}, m) & = |F_r(n, \mathcal{F}, m)|.
\end{align*}
It is clear that 
\begin{equation}\label{trivial lower bound}
f_r(n, \mathcal{F}) \geq 2^{\mathrm{ex}_r(n, \mathcal{F})}.
\end{equation}
In this section, we will study the quantities $f_r(n, \mathcal{F})$ and $f_r(n, \mathcal{F},m )$ when $\mathcal{F}$ is the family of Berge cycles of length at most $4$. 
Let $\mathcal{B}_k = \{\textup{Berge-}C_2, \dots, \textup{Berge-}C_k\}$. 
Note that when a hypergraph is $\textup{Berge-}C_2$-free, this means that any two hyperedges share at most one vertex (i.e., the hypergraph is linear). 
Throughout this section, when we say a hypergraph of {\it girth} $g$, we mean an $r$-uniform hypergraph that is $\mathcal{B}_{g-1}$-free, i.e, it contains no Berge-$C_k$ for $k < g$.

Lazebnik and Verstra\"{e}te \cite{lv} examined girth $5$ hypergraphs and gave the following bounds for $r=3$
\[
\ex_3(n,\mathcal{B}_4) = \frac{1}{6}n^{3/2}+o(n^{3/2})
\]
and for general $r$ (with $n$ large enough),
\[
\frac{1}{4}r^{-4r/3}n^{4/3} \leq \ex_r(n,\mathcal{B}_4) \leq \frac{1}{r(r-1)}n^{3/2}+ O(n).
\]

Our main result in this section is the next theorem.  


\begin{theorem}\label{counting theorem edges girth 5}
	Let $r\geq 2$ and $n$ be large enough. Then
	\[
	f_r(n, \mathcal{B}_4, m) \leq \mathrm{exp}\left(n^{4/3}\log^3 n\right) \left(\frac{n^3}{m^2}\right)^m.
	\]
\end{theorem}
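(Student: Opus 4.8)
The plan is to count girth-$5$ hypergraphs by realising each one as an independent set in a bounded-uniformity ``forbidden-configuration'' hypergraph and then running a container / Kleitman--Winston counting scheme against it; the leading factor $\left(n^3/m^2\right)^m$ will come from a binomial coefficient and the factor $\exp(n^{4/3}\log^3 n)$ will count the containers (equivalently, the ``fingerprints'' of the algorithm).

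First I would reduce to a sparse, locally tree-like picture. Every hypergraph counted by $f_r(n,\mathcal{B}_4,m)$ is $\textup{Berge-}C_2$-free, hence linear, so its $2$-shadow is an edge-disjoint union of $m$ cliques $K_r$; equivalently, the vertex--hyperedge incidence bipartite graph of $H$ has girth at least $10$, since a Berge-$C_k$ is exactly a $2k$-cycle alternating through $k$ vertices and $k$ hyperedges, and we forbid $k\in\{2,3,4\}$. By the Lazebnik--Verstra\"ete bound quoted above we may assume $m\le \textup{ex}_r(n,\mathcal{B}_4)=O(n^{3/2})$, as the statement is otherwise vacuous. I then regard each such $H$ as an $m$-element independent set in the hypergraph $\mathcal{G}$ of uniformity at most $4$ on the ground set $\binom{[n]}{r}$, whose hyperedges are the edge-sets of the Berge-$C_2$, Berge-$C_3$, and Berge-$C_4$ configurations.

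Next comes the engine. Feeding a balanced-supersaturation estimate for short Berge cycles into the hypergraph container lemma produces a family $\mathcal{C}$ of containers, each a collection of at most $T=O(n^{3/2})$ $r$-sets, such that $E(H)\subseteq C$ for some $C\in\mathcal{C}$ for every $\mathcal{B}_4$-free $H$ with $m$ edges, and with $|\mathcal{C}|\le \exp\!\left(n^{4/3}\log^3 n\right)$ (the exponent $n^{4/3}$ is the usual fingerprint size in the $C_4$-free regime, with the extra logarithms absorbing the $r$-uniform bookkeeping). Then
\[
f_r(n,\mathcal{B}_4,m)\le |\mathcal{C}|\binom{T}{m}\le \exp\!\left(n^{4/3}\log^3 n\right)\left(\frac{eT}{m}\right)^{m}\le \exp\!\left(n^{4/3}\log^3 n\right)\left(\frac{n^3}{m^2}\right)^{m},
\]
where the last inequality uses $eT/m\le n^3/m^2$, i.e.\ $eTm\le n^3$; this holds comfortably because $T=O(n^{3/2})$ and $m\le \textup{ex}_r(n,\mathcal{B}_4)=O(n^{3/2})$, so the implied constants multiply to at most $1$ (indeed $\textup{ex}_r\sim \tfrac{1}{r(r-1)}n^{3/2}$ leaves a factor of order $(r(r-1))^2$ to spare), meaning no sharp constant is needed.

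I expect the main obstacle to be verifying the codegree / balanced-supersaturation hypotheses that the container lemma requires, namely that no small set of $r$-sets lies in too many short Berge cycles. This is where $r\ge 3$ is genuinely harder than the graph case: a Berge-$C_4$ through a fixed hyperedge may use any of that hyperedge's $r$ vertices as a contact point, and a vertex of large hypergraph-degree lies in many hyperedges and so inflates the number of configurations. The remedy I would pursue is to double-count Berge paths and cycles pair-by-pair, exploiting linearity (any two hyperedges meet in at most one vertex) together with the pairwise-codegree bounds forced by the absence of the shorter Berge cycles, and then sum over vertex pairs; controlling these over-counts cleanly — and tracking the resulting parameters so the container count stays at $\exp(n^{4/3}\log^3 n)$ while the container size stays $O(n^{3/2})$ — is the crux, the reductions of the first two steps being routine by comparison.
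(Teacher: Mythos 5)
Your route is genuinely different from the paper's. The paper does not set up a container family for the whole hypergraph at once: it orders the vertices by iterated minimum-degree deletion, so that each $v_i$ is attached to a graph $G_{i-1}$ of minimum degree at least $d_{G_i}(v_i)-1$, and then bounds the number of ways to attach $v_i$ by counting independent sets of size $d_i$ in the auxiliary graph $H_{i-1}$ whose edges are pairs joined by a path of length $2$ in $G_{i-1}$ (a Berge-$C_3$/$C_4$ argument shows the new neighborhood must be independent there). The Kleitman--Winston independent-set lemma then applies because the minimum-degree condition directly forces every large vertex subset of $H_{i-1}$ to span many edges; low-degree steps ($d_i\leq i^{1/3}\log i$) are absorbed by the trivial bound and account for the $\exp(n^{4/3}\log^3 n)$ factor. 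The advantage of this scheme is that no supersaturation statement about Berge cycles in dense $r$-graphs is ever needed.

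That is precisely where your proposal has a genuine gap. The balanced-supersaturation input to the container lemma --- that every $n$-vertex $r$-graph with many more than $n^{3/2}$ edges contains many Berge-$C_2$/$C_3$/$C_4$'s distributed so that no small set of hyperedges has large degree in the configuration hypergraph $\mathcal{G}$ --- is the entire engine of your argument, and you do not prove it; you only name it as ``the crux'' and sketch a double count that ``exploits linearity,'' which is unavailable because the host hypergraphs to which supersaturation must apply (the successive containers) are arbitrary dense $r$-graphs, not linear ones. Such a lemma is not routine even for $r=2$ (it is the substance of Morris--Saxton's work on $C_{2\ell}$), and no Berge-cycle analogue is quoted or derived here. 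A second, quantitative problem: your final inequality $eT/m\leq n^3/m^2$ at $m=\Theta(\ex_r(n,\mathcal{B}_4))=\Theta(n^{3/2}/(r(r-1)))$ forces the container size to satisfy $T\leq \tfrac{r(r-1)}{e}n^{3/2}$, i.e.\ within a factor $(r(r-1))^2/e$ of the extremal number (for $r=2$, within about $47\%$). Contrary to your remark that ``no sharp constant is needed,'' off-the-shelf container iterations terminate only when the container drops below $C\cdot\ex$ for whatever large constant $C$ the supersaturation theorem tolerates, so without a near-extremal supersaturation statement you would at best obtain $(Cn^3/m^2)^m$, and the discrepancy $C^m=\exp(\Theta(n^{3/2}))$ cannot be absorbed into $\exp(n^{4/3}\log^3 n)$.
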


Theorem \ref{counting theorem edges girth 5} yields the following two corollaries, the first of which implies 
Theorem \ref{counting theorem girth 5 intro}.  

\begin{corollary}\label{counting theorem girth 5}
	Let $r\geq 2$. Then there exists a constant $C$ such that 
	\[
	f_r(n, \mathcal{B}_4) \leq 2^{C n^{3/2}}.
	\]
\end{corollary}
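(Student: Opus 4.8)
The plan is to derive the corollary from Theorem \ref{counting theorem edges girth 5} by summing the per-$m$ bound over all admissible numbers of edges $m$. First I would write
\[
f_r(n, \mathcal{B}_4) = \sum_{m=0}^{M} f_r(n, \mathcal{B}_4, m),
\]
where $M$ is the largest possible number of edges in an $n$-vertex $\mathcal{B}_4$-free $r$-graph. Crucially, every such hypergraph has at most $\ex_r(n, \mathcal{B}_4)$ edges, and the Lazebnik--Verstra\"{e}te upper bound quoted above gives $M \leq \frac{1}{r(r-1)} n^{3/2} + O(n) = O(n^{3/2})$. Thus the sum has only $O(n^{3/2})$ terms, a fact I will use to absorb polynomial overhead at the end.

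Applying Theorem \ref{counting theorem edges girth 5} to each summand and factoring out the common prefactor gives
\[
f_r(n, \mathcal{B}_4) \leq \exp\!\left(n^{4/3}\log^3 n\right) \sum_{m=0}^{M} \left(\frac{n^3}{m^2}\right)^m.
\]
The heart of the argument is to bound the remaining sum. I would control each term by its maximum over all real $m > 0$: writing $\phi(m) = \log\left(\frac{n^3}{m^2}\right)^m = 3m\log n - 2m\log m$, the function $\phi$ is concave with $\phi'(m) = 3\log n - 2\log m - 2$, so it is maximized at $m^* = n^{3/2}/e$, where $\phi(m^*) = 2m^* = 2n^{3/2}/e$. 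Hence $\left(\frac{n^3}{m^2}\right)^m \leq e^{2n^{3/2}/e}$ for every $m \geq 1$ (the $m=0$ term contributing $1$), and so the sum is at most $(M+1)\, e^{2n^{3/2}/e}$.

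Combining these estimates yields
\[
f_r(n, \mathcal{B}_4) \leq \exp\!\left(n^{4/3}\log^3 n\right) (M+1)\, e^{2n^{3/2}/e}.
\]
To finish I would observe that $n^{4/3}\log^3 n = o(n^{3/2})$ and that $(M+1) = O(n^{3/2})$ is subexponential, so both the first and third factors are $2^{o(n^{3/2})}$; the dominant contribution $e^{2n^{3/2}/e} = 2^{(2/(e\ln 2))\,n^{3/2}}$ fixes the exponential order, and the whole product is at most $2^{C n^{3/2}}$ for a suitable constant $C$ and all large $n$.

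The one step that requires care rather than a crude estimate is the maximization of $\left(\frac{n^3}{m^2}\right)^m$. Simply bounding $n^3/m^2 \leq n^3$ would only give $m \cdot 3\log n = O(n^{3/2}\log n)$ in the exponent, which is off by a logarithmic factor and would not match the clean bound $2^{C n^{3/2}}$; obtaining an exponent of order $n^{3/2}$ without the stray $\log n$ is exactly what the concavity computation at $m^* = n^{3/2}/e$ provides. Everything else is bookkeeping, and the fact that the $\exp(n^{4/3}\log^3 n)$ factor from Theorem \ref{counting theorem edges girth 5} is negligible relative to $n^{3/2}$ is what makes the coarse count-of-terms argument sufficient.
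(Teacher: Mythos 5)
Your argument is correct and is exactly the paper's (one-line) derivation, fleshed out: the authors simply note that $(n^3/m^2)^m = 2^{O(n^{3/2})}$, which is precisely your concavity computation showing the maximum of $3m\log n - 2m\log m$ is $2n^{3/2}/e$, after which the polynomial number of terms and the $\exp(n^{4/3}\log^3 n)$ prefactor are absorbed as you describe.
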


The first group to consider extremal problems in random graphs was probably Babai-Simonovits-Spencer~\cite{babsimspe}. Among others they asked: what is the maximum number of edges of a $C_4$-free subgraph of the random graph $G_{n,p}$ when $p=1/2$? Here we give a partial answer to the corresponding question in Berge-hypergraph setting. Let $G_{n,p}^{(r)}$ be the random $r$-uniform hypergraph on $n$ vertices, each edge being present independently with probability $p$.

\begin{corollary}\label{turan number in random graph}
	Let $0< p < \frac{1}{(r(r-1))^2}$. Then there exists an $\epsilon > 0$ such that with probability tending to $1$,
	\[
	\mathrm{ex}_r(G_{n,p}^{(r)}, \mathcal{B}_4) < (1-\epsilon)\mathrm{ex}_r(n, \mathcal{B}_4).
	\]
\end{corollary}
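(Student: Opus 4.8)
The plan is to run a straightforward first-moment (union bound) argument in which Theorem \ref{counting theorem edges girth 5} supplies the count of candidate subhypergraphs. Write $m^\ast := \mathrm{ex}_r(n,\mathcal B_4)$; the argument will use that $m^\ast = \left(\frac{1}{r(r-1)}+o(1)\right)n^{3/2}$, which for $r=3$ is the Lazebnik--Verstra\"ete formula quoted above. Every girth-$5$ subhypergraph of $G_{n,p}^{(r)}$ has at most $m^\ast$ edges, so the event $\mathrm{ex}_r(G_{n,p}^{(r)},\mathcal B_4)\ge (1-\epsilon)m^\ast$ occurs only if some fixed girth-$5$ $r$-graph with between $(1-\epsilon)m^\ast$ and $m^\ast$ edges sits inside $G_{n,p}^{(r)}$. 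A fixed $r$-graph with $m$ edges is a subhypergraph of $G_{n,p}^{(r)}$ with probability exactly $p^m$, so I would begin from the bound
\[
\Pr\!\left[\mathrm{ex}_r(G_{n,p}^{(r)},\mathcal B_4)\ge (1-\epsilon)m^\ast\right] \le \sum_{m\ge (1-\epsilon)m^\ast} f_r(n,\mathcal B_4,m)\,p^m.
\]

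Next I would insert Theorem \ref{counting theorem edges girth 5} to bound the summand, obtaining terms of the form $\exp\!\left(n^{4/3}\log^3 n\right)\left(\frac{n^3 p}{m^2}\right)^m$. The hypothesis $p<\frac{1}{(r(r-1))^2}$ is used precisely here: it says $\sqrt p\,r(r-1)<1$, so I may fix $\epsilon>0$ small enough that $(1-\epsilon)^2> p\,(r(r-1))^2$ and set $\gamma:=\frac{p\,(r(r-1))^2}{(1-\epsilon)^2}<1$. For every $m$ in the summation range, $m^2\ge (1-\epsilon)^2 (m^\ast)^2 = (1+o(1))\frac{(1-\epsilon)^2}{(r(r-1))^2}\,n^3$, hence $\frac{n^3 p}{m^2}\le (1+o(1))\gamma$; for $n$ large this is at most some fixed $\gamma'\in(\gamma,1)$, uniformly over the range. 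Thus each summand is at most $\exp\!\left(n^{4/3}\log^3 n\right)(\gamma')^m$.

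Summing the resulting geometric tail gives
\[
\Pr\!\left[\mathrm{ex}_r(G_{n,p}^{(r)},\mathcal B_4)\ge (1-\epsilon)m^\ast\right] \le \frac{1}{1-\gamma'}\,\exp\!\left(n^{4/3}\log^3 n\right)(\gamma')^{(1-\epsilon)m^\ast}.
\]
Because $\gamma'<1$ and $(1-\epsilon)m^\ast=\Theta(n^{3/2})$, the factor $(\gamma')^{(1-\epsilon)m^\ast}=\exp\!\left(-\Theta(n^{3/2})\right)$, whereas the counting overhead is only $\exp\!\left(n^{4/3}\log^3 n\right)=\exp\!\left(o(n^{3/2})\right)$. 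The product tends to $0$, so with probability tending to $1$ no such subhypergraph exists, which is exactly the assertion of the corollary.

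The crux is the competition in the exponent. The bound of Theorem \ref{counting theorem edges girth 5} carries the genuinely large prefactor $\exp(n^{4/3}\log^3 n)$, and the argument survives only because forcing $m=\Theta(n^{3/2})$ prescribed edges to be present costs $(\gamma')^m=\exp(-\Theta(n^{3/2}))$, which lives at a strictly higher order. Concretely, I must keep the summation range entirely to the right of the maximizer $m_0=\sqrt{p}\,n^{3/2}/e$ of $m\mapsto (n^3 p/m^2)^m$, so that $n^3 p/m^2$ stays bounded below $1$; this is where both $p<\frac{1}{(r(r-1))^2}$ and the order of $m^\ast$ intervene, and where the constant $\frac{1}{r(r-1)}$ in the hypothesis comes from. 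The one genuinely delicate input is therefore the \emph{lower} bound $m^\ast \ge \left(\frac{1}{r(r-1)}-o(1)\right)n^{3/2}$: it is what guarantees $(1-\epsilon)m^\ast>\sqrt p\,n^{3/2}$ and hence $\gamma'<1$. The matching upper bound on $m^\ast$ is not needed beyond delimiting the range, since the tail is controlled by a geometric series.
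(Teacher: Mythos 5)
Your route is the same as the paper's: a union bound over the hypergraphs counted by Theorem~\ref{counting theorem edges girth 5}, with the hypothesis $p<\frac{1}{(r(r-1))^2}$ entering exactly where you place it, namely to force $n^3p/m^2$ below $1$ throughout the summation range so that the factor $(\gamma')^m=\exp(-\Theta(n^{3/2}))$ overwhelms the prefactor $\exp(n^{4/3}\log^3 n)$. That part of the execution is correct, and your diagnosis that the essential numerical input is a \emph{lower} bound on $m^\ast=\mathrm{ex}_r(n,\mathcal{B}_4)$ is in fact sharper than the paper's own one-line justification, which cites the upper bound $\mathrm{ex}_r(n,\mathcal{B}_4)\le\frac{1+o(1)}{r(r-1)}n^{3/2}$ --- an inequality that points the wrong way for this purpose.

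The gap is in that input itself. You assert $m^\ast=\left(\frac{1}{r(r-1)}+o(1)\right)n^{3/2}$ for all $r\ge2$, but this asymptotic (in particular its lower-bound half, which is the half you actually use) is only known for $r=3$; for general $r$ the paper quotes from Lazebnik--Verstra\"ete only $\frac14 r^{-4r/3}n^{4/3}\le \mathrm{ex}_r(n,\mathcal{B}_4)\le\frac{1}{r(r-1)}n^{3/2}+O(n)$, which leaves open the possibility that $m^\ast=o(n^{3/2})$ for some $r\ge4$. If that happens, then for $m$ near $(1-\epsilon)m^\ast$ the ratio $n^3p/m^2$ tends to infinity rather than staying below $1$, the summands $\exp(n^{4/3}\log^3 n)(n^3p/m^2)^m$ do not decay, and the union bound fails. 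Your closing remark correctly isolates this as the delicate point, but then takes the needed inequality for granted. As written, your argument proves the corollary for $r=3$, and for general $r$ only under the additional hypothesis $(1-\epsilon)\,\mathrm{ex}_r(n,\mathcal{B}_4)\ge(1+\delta)\sqrt{p}\,n^{3/2}$ for some $\delta>0$. This is a defect the paper's terse sketch shares rather than resolves, but it should be stated explicitly rather than absorbed into an unproved asymptotic.
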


Theorem \ref{counting theorem edges girth 5} implies Corollary \ref{counting theorem girth 5} by noting that $(n^3/m^2)^m = 2^{O(n^{3/2})}$ and Corollary \ref{turan number in random graph} by a simple first moment argument combined with the fact \cite{lv} that $\mathrm{ex}_r(n, \mathcal{B}_4) \leq \frac{1+o(1)}{r(r-1)}n^{3/2}$.  

\begin{proof}[Proof of Theorem \ref{counting theorem edges girth 5}]
For a graph $H$ and a natural number $d$, let $\mathrm{ind}(H, d)$ denote the number of independent sets of size exactly $d$ in $H$.
We adapt the proofs of Kleitman's and Winston's upper bound on the number of $C_4$-free graphs \cite{KW} (see also \cite{S} for a nice exposition) and F\"uredi's extension to graphs with $m$ edges \cite{F}.  The rough idea of the proof is that any hypergraph of girth $5$ can be decomposed into a sequence of subhypergraphs satisfying mild conditions, and that the number of such sequences is bounded.
	
	If $G$ is any hypergraph, we may successively peel off vertices of minimum degree. Specifically, let $v_n$ be a vertex such that $d_G(v_n) = \delta(G)$. Once $v_n, v_{n-1}, \dots, v_{k+1}$ are chosen, let $v_k$ satisfy 
	\[
	|\Gamma(v_k)\setminus \{v_n,\dots, v_{k+1}\}| = \delta(G\setminus \{v_n, \dots, v_{k+1}\}).
	\]
	For each $i$, let $G_i = G[\{v_1,\dots, v_i\}]$. This sequence of subhypergraphs has the property that for all $i$,
	\[
	\delta(G_{i-1}) \geq \delta(G_i)-1 = d_{G_i}(v_i)-1.
	\]
	That is, $\delta(G_i) \leq \delta(G_{i-1}) + 1$. Now, if $G$ is $\mathcal{B}_4$-free, then each $G_i$ is also $\mathcal{B}_4$-free. To summarize, any hypergraph of girth $5$ may be constructed one vertex at a time such that 
	\begin{enumerate}
		\item At each step, the subhypergraph is $\mathcal{B}_4$-free.
		\item When adding the $i$'th vertex $v_i$, we have that the minimum degree of the graph which $v_i$ is being added to is at least $d_{G_i}(v_i) -1$.
	\end{enumerate}
	
	The crux of the upper bound is that one cannot add a vertex to a graph of high minimum degree and keep it $\mathcal{B}_4$-free in too many ways. To formalize this, let $g_i(d)$ be the maximum number of ways to attach a vertex of degree $d$ to a $\mathcal{B}_4$-free graph on $i$ vertices with minimum degree at least $d-1$, such that the resulting graph remains $\mathcal{B}_4$-free, and let $g_i = \max_{d\leq i} g_i(d)$. Note that 
	\begin{equation}\label{upper bound on g}
	g_i(d) \leq \binom{i}{(r-1)d}((r-1)d)!
	\end{equation}
	for all $d$, so $g_i$ is well-defined. Now let us count the number of sequences of subhypergraphs $G_1,\dots, G_n$ that can come from a hypergraph of girth $5$ with $m$ edges, $G$. Note that each $G$ of girth $5$ creates (once the vertices are ordered) a unique sequence $G_1,\dots, G_n$. First, we trivially bound the number of ways to order the vertices $(v_1,\dots, v_n)$ by $n!$, and we also trivially bound the number of degree sequences $\{d_{G_1}(v_1),\dots, d_{G_n}(v_n)\}$ by $n!$. By the way we have constructed the sequence $\{G_1,\dots, G_n\}$ and by the definition of $g_i(d)$, we have that 
	\[
	f_r(n, \mathcal{B}_4 , m) \leq n! n! \max \prod_{i=1}^n g_i(d_i),
	\]
	where the maximum is taken over all degree sequences such that $\sum d_i = m$.
	
	If $d_i \leq i^{1/3}\log i$, we use \eqref{upper bound on g} and have that, for large $i$,
	\[
	g_i(d_i) \leq i^{i^{1/3}\log^2 i}.
	\]
	
	From now on we will assume $d_i \geq i^{1/3}\log i$. Assume that $G_i$ is a hypergraph of girth $5$ on $i$ vertices with minimum degree at least $d$. We construct an auxiliary graph $H_i$ with vertex set $V(H_i) = V(G_i)$ and $xy\in E(H_i)$ if and only if there is a path of length $2$ from $x$ to $y$ in the hypergraph $G_i$. 
	
	Now we observe that in order to attach $v_{i+1}$ to $G_i$ and have the resulting graph $G_{i+1}$ remain $\mathcal{B}_4$-free, the neighborhood of $v_{i+1}$ must be an independent set in $H_i$. To see this, if $v_{i+1}\sim x$ and $v_{i+1}\sim y$ where $xy\in E(H_i)$, then there is a path of length $2$ in $G_i$ from $x$ to $y$. Now, if there exists a hyperedge $e\in E(G_{i+1})$ such that $\{x,y, v_{i+1}\} \subset e$, this creates a Berge-$C_3$ in $G_{i+1}$. Otherwise, the vertex $v_{i+1}$ creates a Berge-$C_4$ in $G_{i+1}$. 
	
	Therefore to bound $g_i(d_i)$ it suffices to give a uniform upper bound on $\mathrm{ind}(H_i, d_i)$.  To do this, we use a lemma of Kleitman and Winston, which is the original inspiration for the container method \cite{KW}.
	
	\begin{lemma}[Kleitman and Winston (cf \cite{KLRS, S}]\label{Kleitman and Winston lemma}
		Let $G$ be a graph on $n$ vertices. Let $\beta\in (0,1)$, $q$ an integer, and $R$ a real number satisfy
		\begin{enumerate}
			\item $R\geq e^{-\beta q} n$.
			\item For all subsets $U\subset V(G)$ with $|U| \geq R$,
			\[
			e_G(U) \geq \beta \binom{|U|}{2}.
			\]
		\end{enumerate}
		Then for all $m\geq q$,
		\[
		\mathrm{ind}(G, m) \leq \binom{n}{q}\binom{R}{m-q}.
		\]
	\end{lemma}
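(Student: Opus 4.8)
The plan is to prove the lemma via the \emph{graph container} (Kleitman--Winston) encoding: I will set up a deterministic ``peeling'' algorithm that, applied to any independent set $I$ with $|I| = m \geq q$, produces a fingerprint $S \subseteq I$ of size exactly $q$ together with a residual set $A = A(S)$ of size at most $R$, in such a way that $I \setminus S \subseteq A(S)$ and---crucially---$A(S)$ is determined by $S$ alone. Once this is in place the count is immediate: the map $I \mapsto (S, I \setminus S)$ is injective, there are at most $\binom{n}{q}$ choices for $S$, and for each fixed $S$ the set $I \setminus S$ is an $(m-q)$-subset of $A(S)$, giving at most $\binom{|A(S)|}{m-q} \leq \binom{R}{m-q}$ possibilities. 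Summing over fingerprints yields $\mathrm{ind}(G,m) \leq \binom{n}{q}\binom{R}{m-q}$.

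The algorithm I would use maintains an ``available'' set $A$, initialized to $V(G)$, and a fingerprint $S = \emptyset$, and fixes in advance a linear order on $V(G)$ to break ties. At each round it selects the vertex $v$ of maximum degree in the induced subgraph $G[A]$ (using the fixed order for ties). If $v \in I$ it adds $v$ to $S$ and deletes $v$ together with all its $G[A]$-neighbors from $A$ (a ``yes'' step); otherwise it deletes only $v$ from $A$ (a ``no'' step). I run this until $|S| = q$. Since $I$ is independent, a yes step deletes exactly one element of $I$ from $A$ (namely $v$; no neighbor of $v$ lies in $I$), and a no step deletes none, so elements of $I$ leave $A$ one at a time through yes steps; as $m \geq q$ the process reaches $q$ yes steps before $A$ empties, so the algorithm is well-defined and terminates with $|S| = q$ and $I \setminus S = I \cap A$.

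Two verifications remain. First, \emph{$A(S)$ depends only on $S$}: I would argue this by replaying the algorithm knowing only $S$. The selected vertex $v$ in each round is determined by the current $A$, and the branch taken is determined by whether $v \in I$, which (since $v$ is examined for the first and only time in that round, and is placed in $S$ precisely when $v \in I$) is equivalent to $v \in S$. Hence the entire trajectory, and in particular the terminal $A$, can be reconstructed from $S$, which also gives the injectivity of $I \mapsto (S, I\setminus S)$. Second, \emph{$|A(S)| \leq R$}: here I use hypotheses (1) and (2). Since $A$ is non-increasing, it suffices to control the case that $|A| \geq R$ persists through all $q$ yes steps. Whenever $|A| \geq R$, hypothesis (2) gives $e_G(A) \geq \beta\binom{|A|}{2}$, so the maximum degree in $G[A]$ is at least the average degree $\geq \beta(|A|-1)$; thus a yes step performed while $|A| \geq R$ sends $|A|$ to at most $(|A|-1)(1-\beta) < (1-\beta)|A|$. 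Iterating over $q$ such steps bounds the final size by $n(1-\beta)^q \leq n e^{-\beta q} \leq R$, invoking hypothesis (1). Either $|A|$ drops below $R$ at some yes step (and stays there by monotonicity) or this geometric estimate applies; in both cases $|A(S)| \leq R$.

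The main obstacle I anticipate is conceptual rather than computational: pinning down precisely why the residual set is a function of $S$ alone (the reconstruction/replay argument), since this is exactly what makes the encoding injective and lets the factor $\binom{n}{q}$ count fingerprints independently of $I$. The density-to-degree estimate and the geometric shrinkage are routine once the algorithm is fixed; the delicate point is arranging the tie-breaking rule and the ``examine each vertex at most once'' property so that the replay from $S$ is unambiguous.
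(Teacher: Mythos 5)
Your proof is correct: it is the standard Kleitman--Winston fingerprint argument (max-degree selection, replayability of the run from $S$ alone, and geometric shrinkage of the available set via the density hypothesis), which is exactly the argument in the sources \cite{KW, KLRS, S} that the paper cites. The paper itself does not prove this lemma but imports it as a black box, so your self-contained derivation matches the intended (cited) proof rather than diverging from anything in the paper.
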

	
	We now give an upper bound on $\mathrm{ind}(H_i, d)$. Let $B\subset V(H_i)$. Then (with floors and ceilings omitted)
	\begin{align*}
	e_{H_i}(B) & \geq \sum_{z\in V(G_i)} \binom{ |\Gamma_{G_i}(z)\cap B|/(r-1)}{2} \\
	& \geq i  \binom{\frac{1}{(r-1)i} \sum_{z\in V(G_i)} |\Gamma_{G_i}(z) \cap B|}{2} \\ 
	& \geq i\binom{\frac{1}{(r-1)i} \sum_{y\in B} \frac{d(y)}{r}}{2} \\
	& \geq i \binom{\frac{|B| \delta(G_i)}{r^2 i}}{2} \geq i \binom{\frac{|B|(d_i-1)}{r^2 i }}{2}\\
	& \geq \frac{|B|^2d_i^2}{8r^4 i},
	\end{align*}
	where the last inequality holds for $i$ large enough. This quantity is bigger than 
	\[
	i^{-1/3}\log i\binom{|B|}{2}
	\]
	for $i$ large enough since $d_i \geq i^{1/3}\log i$. Now we let $\beta = i^{-1/3}\log i$ (which is in $(0,1)$ for $i$ large enough), $R = \frac{i}{d_i}$, and $q = i^{1/3}$. Note that $R > 1$ and $e^{-\beta q}i = 1$. Therefore by Lemma \ref{Kleitman and Winston lemma}, we have 
	\[
	\mathrm{ind}(H_i, d_i) \leq \binom{i}{i^{1/3}} \binom{\frac{i}{d_i}}{d_i - i^{1/3}}.
	\] 
	Since $d_i - i^{1/3} \geq \frac{1}{2} d_i$ for $i$ large enough, we have 
	\[
	\mathrm{ind}(H_i, d_i) \leq \left(\frac{2ei}{d_i^2}\right)^{d_i} (i^{2/3})^{i^{1/3}}.
	\]
	Thus
	\begin{align*}
	f_r(n, \mathcal{B}_4, m) &\leq n! n! \max \prod \left(\frac{2ei}{d_i^2}\right)^{d_i} (n^{2/3})^{2n^{1/3}\log^2n}
	\\ & \leq \mathrm{exp}\left( n^{4/3}\log^3 n + (\log n +O(1)) \sum d_i - 2\sum d_i \log d_i\right)
	\end{align*}
	for $n$ large enough. Next we note that $\sum d_i = m$ and by convexity $\sum d_i \log d_i \geq m \log(m/n)$. Rearranging gives the result.
\end{proof}


\end{document}